\documentclass{article}
\usepackage{amsmath,amsfonts,amssymb,amsthm,mathtools,enumitem,geometry,esint,color,hyperref,cleveref,authblk}

\crefname{equation}{}{}
\Crefname{equation}{Equation}{Equations}

\theoremstyle{plain}
\newtheorem{theo}{Theorem}[section]

\theoremstyle{definition}

\newtheorem*{exa*}{Example}
\newtheorem{exa}[theo]{Example}
\newtheorem{lem}[theo]{Lemma}
\newtheorem*{lem*}{Lemma}
\newtheorem{cor}[theo]{Corollary}
\newtheorem{pro}[theo]{Proposition}
\newtheorem*{pro*}{Proposition}
\newtheorem{cla}[theo]{Claim}
\newtheorem*{cla*}{Claim}

\newtheorem*{lie*}{Lie}

\theoremstyle{remark}
\newtheorem{rem}[theo]{Remark}
\newtheorem*{rem*}{Remark}

\newtheorem*{note*}{Note}


\allowdisplaybreaks

\crefname{cor}{corollary}{corollaries}
\crefname{pro}{proposition}{propositions}
\crefname{lem}{lemma}{lemmas}

\newcommand\intd{\mathop{}\!\mathrm{d}}
\newcommand\ind[1]{1_{#1}}
\newcommand\tx\text
\newcommand\f\frac

\newcommand\loc{\mathrm{loc}}
\newcommand\iloc{{\widetilde{\mathrm{loc}}}}

\newcommand\Q{\mathcal Q}
\renewcommand\P{\mathcal P}

\newcommand\avint\fint

\DeclareMathOperator\var{var}

\DeclareMathOperator\sle{l}

\let\div\relax
\DeclareMathOperator\div{div}
\newcommand\M{{\mathrm M}}
\newcommand\Mi{{\widetilde{\mathrm M}}}

\newcommand\BV{\mathrm{BV}}
\newcommand\ub\underbrace

\newcommand\mb[1]{\partial_*{#1}}

\newcommand\mc[1]{\overline{#1}^*}
\newcommand\tc[1]{\overline{#1}}

\newcommand\ti[1]{\mathring{#1}}
\newcommand\sm[1]{\mathcal{H}^{d-1}(#1)}

\newcommand\lm[1]{\mathcal{L}(#1)}
\newcommand\lmb[1]{\mathcal{L}\Bigl(#1\Bigr)}

\newcommand\msl[3]{\lambda[#1,#2,#3]}
\newcommand\trunc[2]{\overline{#2}^{#1}}
\newcommand\p[1]{\mathfrak P(#1)}

\begin{document}

\title{Variation of the dyadic maximal function}
\author{Julian Weigt}
\affil{Department of Mathematics and Systems Analysis, Aalto University, Finland, \texttt{julian.weigt@aalto.fi}}
\maketitle

\begin{abstract}
We prove that for the dyadic maximal operator \(\mathrm M\) and every locally integrable function \(f\in L^1_{\mathrm{loc}}(\mathbb R^d)\) with bounded variation,
also \(\mathrm M f\) is locally integrable
and \(\mathop{\mathrm{var}}\mathrm M f\leq C_d\mathop{\mathrm{var}} f\) for any dimension \(d\geq1\).
It means that if \(f\in L^1_{\mathrm{loc}}(\mathbb R^d)\) is a function whose gradient is a finite measure then so is \(\nabla \mathrm M f\)
and \(\|\nabla \mathrm M f\|_{L^1(\mathbb R^d)}\leq C_d\|\nabla f\|_{L^1(\mathbb R^d)}\).

We also prove this for the local dyadic maximal operator.

\end{abstract}

\begingroup
\renewcommand\thefootnote{}\footnotetext{%
2020 \textit{Mathematics Subject Classification.} 42B25,26B30.\\%
\textit{Key words and phrases.} Maximal function, variation, dyadic cubes.%
}%
\addtocounter{footnote}{-1}%
\endgroup

\section{Introduction}

Let \(d\in\mathbb{N}\) and \(\Omega\) be an open set in \(\mathbb{R}^d\).
For every locally integrable function \(f\in L^1_\loc(\Omega)\)
we define the dyadic local maximal function by
\[\M_\Omega f(x)=\sup_{x\in Q,\tc Q\subset\Omega}\f1{\lm Q}\int_Q f\]
where the supremum is taken over all dyadic cubes \(Q\) that contain \(x\) and whose closure is contained in \(\Omega\).
With minor modifications
we may also demand \(Q\) or its interior to be contained in \(\Omega\) instead,
see \Cref{rem_mostgeneral}.
Various maximal operators have been investigated.
The most well known are the centered Hardy-Littlewood maximal operator 
which averages over all balls centered in \(x\),
and the uncentered Hardy-Littlewood maximal operator 
which averages over all balls that contain \(x\).

The regularity of a maximal operator was first studied in \cite{MR1469106}, 
where Kinnunen proved for the Hardy-Littlewood maximal operator that for \(p>1\) and \(f\in W^{1,p}(\mathbb{R}^d)\) also the bound
\(\|\nabla\M f\|_p\leq C_{d,p}\|\nabla f\|_p\)
holds, from which it follows that the Hardy-Littlewood maximal operator is bounded on \(W^{1,p}(\mathbb{R}^d)\). 
His proof fails for \(p=1\).
Note that also
\(\|\M f\|_1\leq C_{d,1}\|f\|_1\)
fails for any nonvanishing \(f\in L^1(\mathbb{R}^d)\) because \(\M f\not\in L^1(\mathbb{R}^d)\).
So in 2004 Haj\l{}asz and Onninen asked in \cite{MR2041705} whether for \(f\in W^{1,1}(\mathbb{R}^d)\)
the Hardy-Littlewood maximal function satisfies \(\nabla\M f\in L^1(\mathbb{R}^d)\) and \(\|\nabla\M f\|_1 \leq C_d\|\nabla f\|_1\).
This question for various maximal operators has become a well known problem
and has been subject to lots of research,
but has so far remained essentially unanswered in dimensions larger than one.

Here is the main result of this paper.

\begin{theo}\label{theo_goal}
Let \(d\in\mathbb{N}\) and \(\Omega\) be an open subset of \(\mathbb{R}^d\).
Let \(f\in L^1_\loc(\Omega)\) with \(\var_\Omega f<\infty\).
Then \(\M_\Omega f\in L^1_\loc(\Omega)\) and
\[\var_\Omega\M_\Omega f\leq C_d\var_\Omega f\]
where \(C_d\) depends only on \(d\).
\end{theo}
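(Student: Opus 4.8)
The plan is to pass through the superlevel sets of \(\M_\Omega f\) by the coarea formula and to analyse them with a Calder\'on--Zygmund stopping-cube decomposition, reducing the theorem to a single scale-summed inequality for the dyadic averages of \(f\). Before anything else I would secure finiteness and justify the manipulations by approximation: replace \(f\) by its truncations \(\trunc{n}{f}\) (and, if \(\Omega=\mathbb R^d\), by compactly supported pieces), for which \(\M_\Omega\) of the approximant is plainly in \(L^1_\loc\); prove the asserted bound with a constant \(C_d\) independent of the approximation; and recover the general statement from the lower semicontinuity of \(\var_\Omega\) under \(L^1_\loc\) convergence together with \(\M_\Omega\trunc n f\to\M_\Omega f\). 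Because \(\avint_Q(f+c)=\avint_Q f+c\) forces \(\M_\Omega(f+c)=\M_\Omega f+c\), the variation is insensitive to additive constants, so I need no sign assumption and may let the level \(t\) run over all of \(\mathbb R\).

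The heart of the matter is the coarea identity \(\var_\Omega\M_\Omega f=\int_{\mathbb R}\sm{\rb{U_t}\cap\Omega}\intd t\) with \(U_t=\{\M_\Omega f>t\}\). For fixed \(t\), writing \(a_Q=\avint_Q f\), the set \(U_t\) is precisely the union of the maximal dyadic cubes \(Q\) with \(\tc Q\subset\Omega\) and \(a_Q>t\); these stopping cubes have disjoint interiors, so \(\rb{U_t}\subset\bigcup_Q\partial Q\) and \(\sm{\rb{U_t}\cap\Omega}\le\sum_Q 2d\,\ell(Q)^{d-1}\), where \(\ell(Q)\) is the side length. The gain comes from integrating in \(t\): a cube \(Q\) is a stopping cube exactly for \(A_Q\le t<a_Q\), where \(A_Q=\sup\{a_R: Q\subsetneq R,\ \tc R\subset\Omega\}\) is the largest ancestor average, so the set of admissible levels has measure \((a_Q-A_Q)^+\). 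Interchanging the sum and the integral thus reduces the theorem to the level-free estimate
\[
\sum_{Q:\,\tc Q\subset\Omega}\ell(Q)^{d-1}\,(a_Q-A_Q)^+\ \le\ C_d\var_\Omega f .
\]
One may keep only the genuinely exposed faces of each stopping cube, which sharpens the left-hand side, but the difficulty already resides in this cleaner quantity: the total jump by which passing to finer cubes raises the running maximum of the averages must be controlled by the continuous variation of \(f\).

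To prove the displayed dyadic inequality I would organize the contributing cubes along ancestor chains. Only cubes whose average strictly exceeds every ancestor average contribute, i.e. the records of the martingale \(R\mapsto a_R\) along each chain; along one chain the increments \((a_Q-A_Q)^+\) telescope and are bounded by the total rise of the averages, which is where the cancellation lives. Each individual increment is in turn tied to the variation of \(f\) by a relative isoperimetric (Poincar\'e) inequality on cubes: since across a stopping cube the average drops from above \(t\) to at most \(t\) on a same-size dyadic neighbor, one has \(\ell(Q)^{d-1}(a_Q-A_Q)^+\le C_d\,|\nabla f|\) on a fixed dilate of \(Q\). The delicate point is to combine these two ingredients simultaneously over all scales.

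The main obstacle is exactly this final summation. Applying the Poincar\'e bound term by term and summing over scales is fatally lossy: already for \(f=\ind{\{x_1>\theta\}}\) with \(\theta\) close to a dyadic rational it overcounts by a factor comparable to the number of active scales, so it diverges, whereas the true quantity stays bounded by \(C_d\var_\Omega f\). The argument must therefore retain the telescoping cancellation along ancestor chains while charging the increments to the gradient measure \(|\nabla f|\) with overlap bounded by a constant depending only on \(d\). I expect this to require a covering lemma for the stopping cubes — grouping the record cubes so that their dilates have bounded overlap — together with a quantitative relative isoperimetric inequality adapted to a dyadic cube and its neighbors; it is here that the dimensional constant \(C_d\) is produced, and I regard this as the technical core of the proof.
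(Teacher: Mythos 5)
Your reductions are sound as far as they go, but the proposal stops exactly where the paper's work begins. The coarea formula, the identification of \(\{\M_\Omega f>t\}\) with the union of the stopping cubes, the observation that \(Q\) is a stopping cube precisely for \(t\in[A_Q,a_Q)\), and the per-cube Poincar\'e bound \(\sle(Q)^{d-1}(a_Q-A_Q)^+\lesssim\var_{\ti{\hat Q}}f\) (with \(\hat Q\) the dyadic parent) are all correct, and you correctly diagnose that summing these bounds term by term loses a factor of the number of active scales. But at that point you only describe what a proof would need (``a covering lemma \dots\ so that their dilates have bounded overlap''), and the device you hint at cannot work as described: the cubes responsible for the overcounting are the record cubes along single ancestor chains, which are \emph{nested}, so their parents (or any fixed dilates) can never be arranged to have bounded overlap by a Besicovitch/Vitali-type selection, and discarding cubes from the family loses contributions that still have to be paid for. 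The mechanism that actually closes this gap in the paper is not a covering argument at all: it is the cross-level mass-distribution argument of \Cref{cla_mostmasssparse} and \Cref{cla_mostmasssparseabove}, which shows that the excess \(\lm{Q}(f_Q-\lambda)\) of a stopping cube is carried by strictly smaller subcubes \(P\subsetneq Q\) over level ranges on which \(\lm{P\cap\{f>\mu\}}\le\lm{P}/2\), so that the relative isoperimetric inequality (\Cref{lem_rii}) applies inside each such \(P\), and the multi-scale overcounting then collapses through the identity \(\sum_{Q\supsetneq P}1/\sle(Q)=1/\sle(P)\) rather than through bounded overlap; this is the content of \Cref{pro_densitylow}. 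Nothing in your sketch supplies this (or any substitute) mechanism, so there is a genuine gap at the acknowledged ``technical core.''

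Moreover, the inequality you reduce to is stronger than what the paper proves, which makes the gap harder, not easier, to close. By bounding \(\sm{\mb{\{\M_\Omega f>t\}}}\) by the full sum \(\sum_{Q}\sm{\partial Q}\) over stopping cubes you discard the absorption of \(\partial Q\) into \(\mc{\{f>t\}}\), which the paper retains via \Cref{lem_boundaryofunion}. In the proof of \Cref{pro_goal} the full boundary \(\sm{\partial Q}\) is only ever summed in the low-density regime \(\lm{Q\cap\{f>t\}}<2^{-d-2}\lm{Q}\) (that is \Cref{pro_densitylow}); in the complementary regime the paper estimates \(\sm{\partial Q\setminus\mc{\{f>t\}}}\lesssim\sm{\mb{\{f>t\}}\cap\ti Q}\) by \Cref{lem_varav} and sums over the disjoint stopping cubes at fixed \(t\). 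Your version must in particular pay \(\sle(Q)^{d-1}\) in full for stopping cubes with \(Q\subset\{f>t\}\): there \(\mb{\{f>t\}}\cap\ti Q=\emptyset\), so no isoperimetric input at level \(t\) inside \(Q\) exists, whereas in the paper such cubes contribute nothing because \(\partial Q\subset\mc{\{f>t\}}\). Controlling these density-one cubes forces exactly the charging to other levels and smaller cubes described above; your proposed local tools (Poincar\'e on a fixed dilate plus bounded overlap) give nothing summable for them. Finally, a smaller but real omission: the membership \(\M_\Omega f\in L^1_\loc(\Omega)\), which is part of the statement and is needed even to invoke the coarea formula for \(\M_\Omega f\), does not follow from truncating \(f\); the paper proves it separately (\Cref{lem_mflinfae} together with the maximal function theorem and Sobolev embedding).
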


Note that we can for example take \(\Omega=\mathbb{R}^d\).
\Cref{theo_goal} answers the question of Haj\l{}asz and Onninen for the dyadic maximal operator in the appropriate sense
because it is clear that the gradient of the dyadic maximal function usually does not exist as a function in \(L^1(\mathbb{R}^d)\).
It also means that \(\|\nabla\M f\|_p\leq C_{d,p}\|\nabla f\|_p\) does not make sense for any \(p\) for the dyadic maximal operator.
However for \(p=1\)
it actually suffices also for the uncentered Hardy-Littlewood maximal operator to prove \(\var\M f\leq C_{d,p}\var f\),
because under this assumption \(f\in W^{1,1}(\mathbb{R}^d)\) implies \(\nabla\M f\in L^1(\mathbb{R}^d)\).
This is due to Panu Lahti \cite{panubvsobolev}.
In this sense \Cref{theo_goal} is the first full answer to the question of Haj\l{}asz and Onninen for any maximal operator in dimensions larger than one to the best of our knowledge.

In \cite{weigt2020variation} we already proved \Cref{theo_goal} for characteristic functions
for the dyadic and the uncentered Hardy-Littlewood maximal operator.
This paper also makes use of \Cref{lem_varav},
which is a variant of the relative isoperimetric inequality established in \cite{weigt2020variation}.

In one dimension for \(L^1(\mathbb{R})\) the gradient bound has already been proven in \cite{MR1898539} by Tanaka for the uncentered maximal function,
and later in \cite{MR3310075} by Kurka for the centered Hardy-Littlewood maximal function. 
The latter proof turned out to be much more complicated. 
In \cite{MR2276629}, Aldaz and P\'erez L\'azaro improved Tanaka's bound to the sharp
\(\|\nabla\M f\|_{L^1(\mathbb{R})}\leq\|\nabla f\|_{L^1(\mathbb{R})}\)
for the uncentered Hardy-Littlewood maximal function.
In \cite{MR3800463} Luiro has proven the gradient bound for the uncentered maximal operator for radial functions in \(W^{1,1}(\mathbb{R}^d)\).
In \cite{MR2539555} Aldaz and P\'erez L\'azaro have done the same for block decreasing functions.

As a first step towards weak differentiability, Haj\l{}asz and Mal\'y proved in \cite{MR2550181}
that for \(f\in L^1(\mathbb{R}^d)\) the centered Hardy-Littlewood maximal function is approximately differentiable.
In \cite{MR2868961} Aldaz, Colzani and P\'erez L\'azaro prove bounds on the modulus of continuity for all dimensions.
A related question is whether the maximal operator is a continuous operator.
Luiro proved in \cite{MR2280193} that for \(p>1\) the uncentered maximal operator is continuous on \(W^{1,p}(\mathbb{R}^d)\).
There is ongoing research for the endpoint case \(p=1\).
For example Carneiro, Madrid and Pierce proved in \cite{MR3695894} that for the uncentered maximal function
\(f\mapsto\nabla\M f\) is continuous \(W^{1,1}(\mathbb{R})\rightarrow L^1(\mathbb{R})\).

The regularity of maximal operators has also been studied on other spaces and for other maximal operators.
We focus on the endpoint \(p=1\).
For example in \cite{MR3063097} Carneiro and Svaiter and in \cite{carneiro2019gradient} and Carneiro and Gonz\'alez-Riquelme consider
convolution maximal operators associated to certain partial differential equations.
They prove
\(\|\nabla\M f\|_{L^1(\mathbb{R}^d)}\leq C_d\|\nabla f\|_{L^1(\mathbb{R}^d)}\)
for \(d=1\), and for \(d>1\) if \(f\) is radial.
In \cite{MR3091605} Carneiro and Hughes proved the discrete result
\(\|\nabla\M f\|_{l^1(\mathbb{Z}^d)}\leq C_d\|f\|_{l^1(\mathbb{Z}^d)}\)
for centered and uncentered maximal operators.
This bound does not hold on \(\mathbb{R}^d\)
but is weaker than the yet unknown
\(\|\nabla\M f\|_{l^1(\mathbb{Z}^d)}\leq C_d\|\nabla f\|_{l^1(\mathbb{Z}^d)}\),
due to
\(\|\nabla f\|_{l^1(\mathbb{Z}^d)}\leq C_d\|f\|_{l^1(\mathbb{Z}^d)}\).
In \cite{MR2328816} Kinnunen and Tuominen work in the metric setting.
They prove the boundedness of a discrete maximal operator in the Haj\l{}asz Sobolev space \(M^{1,1}\).
In \cite{MR3891939} Pérez, Picon, Saari and Sousa consider Hardy-Sobolev spaces instead of Sobolev spaces.
They prove the boundedness of certain convolution maximal operators on \(\dot H^{1,p}\) for a sharp range of exponents, including \(p=1\).
The study of the regularity of the fractional maximal operators was initiated by Kinnunen and Saksman in \cite{MR1979008}.
It does not map from \(L^p(\mathbb{R}^d)\) to \(L^p(\mathbb{R}^d)\) but the exponent changes, so also the endpoint question,
formulated in \cite{MR3624402}, looks a little different.
It remains unanswered, but there is partial progress, similarly as for the Hardy-Littlewood maximal operator;
see for example \cite{beltran2019regularity,MR3912794,MR3624402,2017arXiv171007233L}.
For more background information on the regularity of maximal operators
there is a survey \cite{carneiro2019regularity} by Carneiro.

The dyadic maximal operator has enjoyed a bit less attention than its continuous counterparts,
such as the centered and the uncentered Hardy-Littlewood maximal operator.
Dyadic cubes are usually way easier to deal with than balls,
but the dyadic version still serves as a model case for the continuous versions since they share many properties.
A classical example is the Hardy-Littlewood maximal inequality,
where the proofs are identical for both types of maximal operators,
after the Vitali Covering Lemma is applied for the continuous version,
which however is the most complicated part of the proof.
Another example is \cite{weigt2020variation},
which proves \(\var\M\ind E\leq C_d\var\ind E\) for the dyadic maximal operator and the uncentered Hardy-Littlewood maximal operator.
The proof for the dyadic maximal operator is much easier,
but the same proof strategy also works for the uncentered maximal operator.
But also there, the general strategy is simple in comparison to the tools that are needed to apply the strategy in the continuous setting.
Therefore this paper may raise hope that the variation boundedness also holds for continuous maximal operators.
For the centered Hardy-Littlewood maximal operator another strategy would be necessary though,
because both the proofs in \cite{weigt2020variation} and here rely on the fact that
the levels sets \(\{\M f>\lambda\}\) of the maxmial functions can be written as
the union of all balls/dyadic cubes \(X\) with \(\int_X f>\lambda\lm X\),
which does not hold for the centered Hardy-Littlewood maximal function.

For \(f\in L^1_\loc(\Omega)\)
it already follows from well-known theory that
\(\M_\Omega f\in L^1_\loc(\Omega)\).
We state it in \Cref{theo_goal} because it is a prerequisite to define the variation of \(\M_\Omega f\).
Define \(L^1_\iloc(\Omega)\) to be the set of all functions \(f\) such that for each measurable and bounded set \(U\)
we have that \(\int_U|f|\) is finite.
Note that \(L^1_\iloc(\Omega)\subset L^1_\loc(\Omega)\).
For \(f\in L^1_\iloc(\Omega)\) define
\[\Mi_\Omega f(x)=\sup_{x\in Q,\ \ti Q\subset\Omega}\f1{\lm Q}\int_Qf,\]
where the supremum is taken over all dyadic cubes \(Q\) that contain \(x\) and whose interior is contained in \(\Omega\).
For local maximal operators such as \(\Mi_\Omega\),
\(L^1_\loc(\Omega)\) is not the correct domain of definition
because \(f\in L^1_\loc(\Omega)\) does not imply that \(\Mi_\Omega f\) is finite almost everywhere.
This has already been observed in footnote (2) of \cite[p.~170]{MR2041705}.
Instead the following variant of \Cref{theo_goal} holds true.

\begin{theo}\label{theo_goalint}
Let \(f\in L^1_\iloc(\Omega)\) with \(\var_\Omega f<\infty\).
Then \(\Mi_\Omega f\in L^1_\iloc(\Omega)\) and
\[\var_\Omega\Mi_\Omega f\leq C_d\var_\Omega f.\]
\end{theo}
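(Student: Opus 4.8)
The plan is to deduce \Cref{theo_goalint} from the already-established \Cref{theo_goal} by a limiting argument, exploiting the fact that the two operators \(\M_\Omega\) and \(\Mi_\Omega\) differ only in whether one requires \(\tc Q\subset\Omega\) or \(\ti Q\subset\Omega\). The key observation is that \(\Mi_\Omega\) can be recovered from the closed-cube operator by an exhaustion of \(\Omega\) from the inside, so I would first set up such an exhaustion and then pass to a limit in the variation bound.

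\begin{proof}[Proof plan]
First I would fix \(f\in L^1_\iloc(\Omega)\) with \(\var_\Omega f<\infty\) and relate the two maximal functions. For any dyadic cube \(Q\) with \(\ti Q\subset\Omega\), one has \(\tc Q\subset\Omega'\) for any slightly larger open set, and conversely every cube admitted by \(\M_\Omega\) is admitted by \(\Mi_\Omega\); hence \(\M_\Omega f\leq\Mi_\Omega f\) pointwise, and the two differ only on cubes touching \(\partial\Omega\). The natural device is to exhaust \(\Omega\) by the open sets \(\Omega_n=\{x\in\Omega:\tx{dist}(x,\partial\Omega)>1/n\}\) (intersected with a large ball if \(\Omega\) is unbounded), apply \Cref{theo_goal} on each \(\Omega_n\) to get \(\var_{\Omega_n}\M_{\Omega_n}f\leq C_d\var_{\Omega_n}f\leq C_d\var_\Omega f\), and then let \(n\to\infty\). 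The constant \(C_d\) is uniform in \(n\), which is what makes the passage to the limit useful.

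Next I would establish the pointwise convergence \(\M_{\Omega_n}f\to\Mi_\Omega f\) as \(n\to\infty\). For a fixed \(x\) and any \(\varepsilon>0\), pick a dyadic cube \(Q\) with \(x\in Q\), \(\ti Q\subset\Omega\), and average exceeding \(\Mi_\Omega f(x)-\varepsilon\); since \(\ti Q\) is open and compactly contained once we shrink slightly, or more directly since \(\tc Q\subset\Omega\) whenever \(Q\) avoids \(\partial\Omega\), this \(Q\) is eventually admissible for \(\M_{\Omega_n}\). This uses precisely the \(L^1_\iloc\) hypothesis to guarantee the integral \(\int_Q f\) is finite and the suprema are meaningful, addressing the subtlety flagged in the excerpt that \(L^1_\loc\) is the wrong domain for the interior operator. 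The monotonicity \(\M_{\Omega_n}f\nearrow\Mi_\Omega f\) should follow since \(\Omega_n\) increases to \(\Omega\), which also yields \(\Mi_\Omega f\in L^1_\iloc(\Omega)\) by monotone convergence together with the local integrability already available from \Cref{theo_goal} on each \(\Omega_n\).

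Finally I would conclude via lower semicontinuity of the variation under \(L^1_\loc\) convergence: from \(\M_{\Omega_n}f\to\Mi_\Omega f\) in \(L^1_\loc\) one gets
\[
\var_\Omega\Mi_\Omega f\leq\liminf_{n\to\infty}\var_\Omega\M_{\Omega_n}f\leq\liminf_{n\to\infty}\var_{\Omega_n}\M_{\Omega_n}f\leq C_d\var_\Omega f,
\]
where the middle inequality needs care because \(\M_{\Omega_n}f\) is a function on \(\Omega_n\) rather than all of \(\Omega\); one extends it or restricts the variation to \(\Omega_n\) and uses that \(\var\) is monotone under restriction to larger open sets. The main obstacle I anticipate is exactly this bookkeeping at the boundary: ensuring that cubes straddling \(\partial\Omega\) do not contribute spurious variation and that the exhaustion interacts cleanly with the distinction between \(\tc Q\subset\Omega\) and \(\ti Q\subset\Omega\). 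One must verify that no jump of \(\Mi_\Omega f\) is lost in the limit and that the lower-semicontinuity step is applied to genuinely \(L^1_\loc\)-convergent functions on a common domain, perhaps by working on a fixed compactly contained \(U\Subset\Omega\) and then taking a supremum over such \(U\).
\end{proof}
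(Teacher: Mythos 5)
There is a genuine gap, and it is fatal to the approach as written: your exhaustion recovers the wrong maximal function. A dyadic cube \(Q\) with \(\ti Q\subset\Omega\) but \(\tc Q\not\subset\Omega\) --- that is, a cube whose boundary meets \(\partial\Omega\) --- is not admissible for \(\M_{\Omega_n}\) for \emph{any} \(n\), since \(\tc Q\subset\Omega_n\subset\Omega\) would contradict \(\tc Q\not\subset\Omega\). Such cubes are exactly what distinguishes \(\Mi_\Omega\) from \(\M_\Omega\), so your exhaustion actually produces \(\sup_n\M_{\Omega_n}f=\M_\Omega f\), not \(\Mi_\Omega f\), and the two can differ on a set of positive measure. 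Concretely, take \(d=1\), \(\Omega=(0,1)\), \(f=\ind{(0,1/2)}\). For \(x\in(1/2,1)\), every dyadic interval containing \(x\) whose closure lies in \(\Omega\) is contained in \([1/2,1)\), where \(f\) vanishes, so \(\M_{\Omega_n}f(x)=\M_\Omega f(x)=0\) for all \(n\); on the other hand \([0,1)\) has interior contained in \(\Omega\), so \(\Mi_\Omega f(x)\geq f_{[0,1)}=1/2\). Hence the monotone convergence \(\M_{\Omega_n}f\nearrow\Mi_\Omega f\) that you assert is false, and your lower-semicontinuity step can only yield a bound for \(\var_\Omega\M_\Omega f\) (i.e., \Cref{theo_goal} again), not for \(\var_\Omega\Mi_\Omega f\). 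This is not boundary bookkeeping that more care can repair: no exhaustion by open sets compactly contained in \(\Omega\), combined with the closed-cube operator, can ever see the boundary-touching cubes.

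A second, independent gap is the claim that \(\Mi_\Omega f\in L^1_\iloc(\Omega)\) follows by monotone convergence from the local integrability supplied by \Cref{theo_goal} on each \(\Omega_n\). Even if your limit function were correct, membership in \(L^1_\iloc(\Omega)\) demands integrability on bounded sets that reach all the way to \(\partial\Omega\), which no information on compactly contained subsets provides, and \Cref{theo_goal} gives no bound on \(\int_U\M_{\Omega_n}f\) that is uniform in \(n\). The paper treats this point with a dedicated argument (\Cref{pro_mfinl1loc}), combining the \(L^{\f d{d-1}}\) boundedness of the maximal operator and the Gagliardo--Nirenberg--Sobolev inequality (\Cref{lem_mfinl1q}) with a quadrant argument (\Cref{lem_mflinfae}) that produces a level \(\lambda_0\) above which the relevant cubes form a bounded set. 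The paper's route to the variation bound is also structurally different from yours: instead of exhausting the domain, it exhausts the family of admissible cubes. The finite estimate \Cref{pro_goal} is formulated so that only \(\bigcup_{Q\in\Q}\ti Q\subset\Omega\) is required --- boundary-touching cubes are allowed --- and \Cref{pro_levelsetgoal} passes to the limit over increasing finite families of cubes whose interiors lie in \(\Omega\), using truncations and the lower semicontinuity of \Cref{lem_l1approx}; the coarea formula, made legitimate by the \(L^1_\iloc\) membership, then finishes the proof. If you want to salvage a limiting argument, the approximation must happen at the level of the cube family defining \(\Mi_\Omega\), not at the level of the domain \(\Omega\).
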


\begin{rem}
\Cref{theo_goal} and \Cref{theo_goalint} also extend to the maximal function of the absolute value due to
\(\var\M_\Omega(|f|)\leq C_d\var_\Omega|f|\leq C_d\var_\Omega f\).
\end{rem}
Typically, the maximal operator integrates over \(|f|\) instead of \(f\).
That is because traditionally the maximal function is used for \(L^p\) estimates
for which the absolute value of a function matters.
However here we are looking at regularity properties
and didn't see a major reason to restrict like that.

\begin{rem}\label{rem_mostgeneral}
As will be visible from the proof,
\Cref{theo_goal} and \Cref{theo_goalint} actually hold for all maximal operators of the form
\[\M f(x)=\sup_{x\in Q,\ Q\in\Q}\f1{\lm Q}\int_Qf,\]
where \(\Q\) is a collection of dyadic cubes \(Q\) with \(\tc Q\subset \Omega\) or \(\ti Q\subset\Omega\) respectively,
and functions \(f\) such that \(\M f\geq f\) a.e.\ in \(\Omega\).
The constant is the same as in \Cref{theo_goal},
in particular it only depends on \(d\).
\end{rem}

\begin{rem}
The discrete version of \Cref{theo_goal} also holds on \(\mathbb{Z}^d\).
This is a consequence of the correspondence between a function \(f\) on \(\mathbb{Z}^d\)
and \(\hat f=\sum_{z\in\mathbb{Z}^d}f(z)\ind{[0,1]^d+z}\) on \(\mathbb{R}^d\).
\end{rem}

The main step towards the proof of \Cref{theo_goal} is the following finite version.

\begin{pro}\label{pro_goal}
Let \(\Q\) be a finite set of dyadic cubes,
such that for each dyadic cube \(P\subset\bigcup_{Q\in\Q}Q\) for which there is a \(Q\in\Q\) with \(Q\subset P\)
we have \(P\in\Q\).
Let \(\Omega\) be open with
\(\bigcup_{Q\in\Q}\ti Q\subset\Omega.\)
Let \(f\in L^1(\Omega)\) and denote
\[\M_\Q f(x)=\max\Bigl\{f(x),\f1{\lm Q}\int_Qf:x\in Q\in\Q\Bigr\}.\]
Then
\begin{equation}\label{eq_goalfinite}
\var_\Omega\M_\Q f\leq C_d\var_\Omega f.
\end{equation}
\end{pro}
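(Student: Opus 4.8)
The plan is to use the coarea formula to convert the variation of \(\M_\Q f\) into the perimeters of its superlevel sets, and then to charge each superlevel boundary to \(\var_\Omega f\) via a jump estimate across dyadic faces. First I would write, by the coarea formula for \(\BV\) functions,
\[
\var_\Omega\M_\Q f=\int_{-\infty}^\infty\sm{\rb{E_\lambda}\cap\Omega}\intd\lambda,\qquad E_\lambda\seq\{\M_\Q f>\lambda\},
\]
and likewise \(\var_\Omega f=\int_{-\infty}^\infty\sm{\rb{\{f>\lambda\}}\cap\Omega}\intd\lambda\). Since \(\Q\) is finite there are only finitely many distinct sets \(E_\lambda\) and every quantity is finite, so no convergence issues arise. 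The structural point is that, using \(\M_\Q f\geq f\) (the \(\max\) with \(f(x)\) in the definition),
\[
E_\lambda=\{f>\lambda\}\cup\bigcup_{Q\in\Q,\ \avint_Qf>\lambda}Q,
\]
so up to an \(\sm\cdot\)-null set \(\rb{E_\lambda}\) is contained in \(\rb{\{f>\lambda\}}\) together with a union of faces of dyadic cubes.

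Next I would split the estimate along this decomposition. The part of \(\rb{E_\lambda}\) coming from \(\rb{\{f>\lambda\}}\) integrates in \(\lambda\) to at most \(\var_\Omega f\), so the whole problem reduces to controlling the exposed dyadic faces. For fixed \(\lambda\) let \(\mathcal M_\lambda\) be the maximal cubes of \(\{Q\in\Q:\avint_Qf>\lambda\}\); these are pairwise disjoint, and every exposed dyadic face in \(\rb{E_\lambda}\cap\Omega\) lies on \(\partial Q\) for some \(Q\in\mathcal M_\lambda\). Where such a face \(F\subset\partial Q\) survives into \(\rb{E_\lambda}\), the region just outside \(F\) is not in \(E_\lambda\), hence there \(\M_\Q f\leq\lambda\); in particular \(f\leq\lambda\) there, and by the saturation hypothesis on \(\Q\) the adjacent equal-sized cube \(Q'\) (when it lies in \(\Omega\)) belongs to \(\Q\) and satisfies \(\avint_{Q'}f\leq\lambda\). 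Faces whose outer neighbour leaves \(\Omega\) contribute nothing, since the perimeter is measured relative to \(\Omega\).

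Then I would carry out the charging. A face \(F=\tc Q\cap\tc{Q'}\) can appear in \(\rb{E_\lambda}\) (with \(Q\) inside and \(Q'\) outside) only for \(\avint_{Q'}f\leq\lambda<\avint_Qf\), so its total contribution over all \(\lambda\) is at most \(\sm F\cdot(\avint_Qf-\avint_{Q'}f)\). Applying the relative–isoperimetric/Poincaré estimate of \Cref{lem_varav} on the box \(Q\cup Q'\) bounds this, scale-invariantly, by \(C_d\var_{Q\cup Q'}f\); summing over faces with the bounded overlap of the boxes \(Q\cup Q'\) then yields \(C_d\var_\Omega f\), which combined with the \(\{f>\lambda\}\) contribution gives \(\eqref{eq_goalfinite}\).

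The main obstacle is the multiscale bookkeeping in the last step. The outer neighbour of an exposed face need not be a single equal-sized cube: the exposed part of \(F\) may border smaller dyadic cubes with mixed on/off status, or the set \(\{f\leq\lambda\}\) directly, and this configuration changes with \(\lambda\). Consequently the charging scale and box depend on both the local geometry and the level, and the difficulty is to guarantee that the total charge still has overlap bounded by a dimensional constant \emph{uniformly across all scales and all levels simultaneously}. This is exactly where the nested-or-disjoint structure of dyadic cubes, the maximality of \(\mathcal M_\lambda\), the saturation hypothesis on \(\Q\), and the full strength of \Cref{lem_varav} are needed, rather than the single-pair estimate sketched above.
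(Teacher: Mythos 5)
Your first half---the coarea formula, the identity \(\{\M_\Q f>\lambda\}=\{f>\lambda\}\cup\bigcup\Q_\lambda\) with \(\Q_\lambda\) the maximal cubes of \(\{Q\in\Q:f_Q>\lambda\}\), splitting off \(\var_\Omega f\) via \Cref{lem_boundaryofunion}, and reducing to the exposed faces of the cubes of \(\Q_\lambda\)---is exactly how the paper begins. The gap is the charging step, and it is not bookkeeping: it is the main difficulty of the theorem, which your final paragraph concedes rather than closes. The single-pair bound \(\sm{F}\,(f_Q-f_{Q'})\lesssim\var_{Q\cup Q'}f\) for a face \(F\) shared by equal-sized dyadic neighbours \(Q,Q'\) is a valid Poincar\'e-type inequality, but the summation you propose cannot be carried out with bounded overlap, because the pair boxes of nested scales all pile up on the same piece of variation. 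Concretely, let \(f=\ind R\) for a dyadic cube \(R\) of sidelength \(2^{-n}\) and let \(\Q=\{Q_0=R,Q_1,\dots,Q_n\}\) be the chain of dyadic ancestors of \(R\) up to the unit cube \(Q_n\); this family satisfies the saturation hypothesis. For \(\lambda\in[2^{-(k+1)d},2^{-kd})\) the unique maximal cube is \(Q_k\) and \emph{all} of \(\partial Q_k\) is exposed, so every scale genuinely contributes. Each pair box \(Q_k\cup Q_k'\) contains \(R\) and a fixed fraction of \(\partial R\) in its interior, so \(\var_{Q_k\cup Q_k'}f\gtrsim\var f\) for every \(k\), and your bound returns \(\sum_k C_d\var_{Q_k\cup Q_k'}f\gtrsim n\,\var f\): there is no dimensional constant. (The true total face contribution here is \(\lesssim\var f\), but only because the level windows shrink like \(2^{-kd}\) while the perimeters grow like \(2^{k(d-1)}\)---precisely the cancellation you discard the moment you replace \(\sm F(f_{Q_k}-f_{Q_k'})\) by \(\var_{Q_k\cup Q_k'}f\).) Two further defects: saturation does \emph{not} put equal-sized neighbours into \(\Q\) (in the chain example the siblings of \(Q_k\) are absent), so the inference ``\(Q'\) is off, hence \(f_{Q'}\le\lambda\)'' and with it the window \([f_{Q'},f_Q)\) is unjustified; and a face whose outer neighbour leaves \(\Omega\) can still meet \(\Omega\) in a set of positive \(\mathcal H^{d-1}\)-measure, so it is not automatically negligible---indeed \(f\) may be undefined on part of \(Q'\), making \(\var_{Q\cup Q'}f\) meaningless.

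The paper avoids all of this by charging \emph{inward}, never to a neighbour, and by a density dichotomy rather than a face-by-face analysis. When \(\lm{Q\cap\{f>\lambda\}}\geq2^{-d-2}\lm Q\), \Cref{lem_varav} bounds \(\sm{\partial Q\setminus\mc{\{f>\lambda\}}}\) by a constant times \(\sm{\mb{\{f>\lambda\}}\cap\ti Q}\), i.e.\ by variation of \(f\) inside \(Q\) itself; since the cubes of \(\Q_\lambda\) are pairwise disjoint for each fixed \(\lambda\), integrating in \(\lambda\) gives \(C_d\var_\Omega f\) with no overlap problem and no issues at \(\partial\Omega\). The low-density levels are the real content, isolated as \Cref{pro_densitylow}: there \Cref{cla_mostmasssparse} and \Cref{cla_mostmasssparseabove} show that a large average despite low density forces superlevel mass at strictly finer scales inside \(Q\), in level windows \((\msl P{\p{\bigcup\Q}}{1/2},f_P)\) that are essentially disjoint for nested cubes \(P\), and \Cref{lem_rii} converts that mass into perimeter of \(\{f>\lambda\}\); the cross-scale pile-up that breaks your summation is then neutralized by the geometric identity \(\sum_{Q\supsetneq P}1/\sle(Q)=1/\sle(P)\), so that each cube \(P\) is charged in total only as much as by a single ancestor. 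Your sketch defers at exactly this point to ``the full strength of \Cref{lem_varav}'', but that lemma alone does not resolve it; what is missing is a substitute for the telescoping mechanism of \Cref{pro_densitylow}, and without one the proof does not close.
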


We first prove \Cref{pro_goal} because it allows us to set aside convergence issues.
In the proof of \Cref{theo_goal} we only use \Cref{pro_goal} with \(\Omega=\bigcup_{Q\in\Q}\ti Q\).
\begin{rem}
In \Cref{pro_goal} we could also prove \(\var_U\M_\Q f\leq C_d\var_U f\) for any Borel set \(U\subset\Omega\) with \(\bigcup_{Q\in\Q}\ti Q\subset U\)
because as noted in \cite[Theorem~3.40]{MR1857292}, the coarea formula \Cref{lem_coareabv} also holds for Borel sets.
\end{rem}

I would like to thank my supervisor, Juha Kinnunen for all of his support,
Panu Lahti for repeated reading of and advice on the manuscript,
and Olli Saari for his idea on how to prove \(\M_\Omega f\in L^1_\loc(\Omega)\) more quickly.
The author has been supported by the Vilho, Yrj\"o and Kalle V\"ais\"al\"a Foundation of the Finnish Academy of Science and Letters.

\section{Setup}

We work in the setting of functions of bounded variation, as in Evans-Gariepy \cite{MR3409135}, Section~5. 
For an open set \(\Omega\subset\mathbb{R}^d\), a function \(f\in L^1_\loc(\Omega)\) is said to have locally bounded variation
if for each open and compactly supported \(V\subset \Omega\) we have
\[\sup\Bigl\{\int_Vf\div\varphi:\varphi\in C^1_{\tx c}(V;\mathbb{R}^d),\ |\varphi|\leq1\Bigr\}<\infty.\]
Such a function comes with a measure \(\mu\) and a function \(\nu:\Omega\rightarrow\mathbb{R}^d\) that has \(|\nu|=1\) \(\mu\)-a.e.\ such that for all \(\varphi\in C^1_{\tx c}(\Omega;\mathbb{R}^d)\) we have
\[\int_Vf\div\varphi=\int_V\varphi\nu\intd \mu.\]
We define the variation of \(f\) in \(\Omega\) by
\[\var_\Omega f=\mu(\Omega).\]
Recall the definition of the set of dyadic cubes
\[\bigcup_{n\in\mathbb{Z}}\{[x_1,x_1+2^n)\times\ldots\times[x_d,x_d+2^n):i=1,\ldots,n,\ x_i\in2^n\mathbb{Z}\}.\]
For a dyadic cube \(Q\) denote by \(\sle(Q)\) the sidelength of \(Q\).
For a locally integrable function \(f\) denote
\[f_Q=\avint_Qf=\f1{\lm Q}\int_Qf.\]
For a set \(\Q\) of dyadic cubes denote
\[\bigcup\Q=\bigcup_{Q\in\Q}Q\]
as is commonly used in set theory.
For a set \(\Omega\subset\mathbb{R}^d\) denote by \(\p \Omega\) the set of dyadic cubes contained in \(\Omega\).
By \(a\lesssim b\) we mean that there exists a constant \(C_d\) that depends only on the dimension \(d\) such that \(a\leq C_d b\).
For a measurable set \(E\subset\mathbb{R}^d\) we define the measure theoretic boundary by
\begin{align*}
\mb E&=\Bigl\{x:\limsup_{r\rightarrow0}\f{\lm{B(x,r)\setminus E}}{r^d}>0,\ \limsup_{r\rightarrow0}\f{\lm{B(x,r)\cap E}}{r^d}>0\Bigr\}\\
\intertext{and the measure theoretic closure by}
\mc E&=\Bigl\{x:\limsup_{r\rightarrow0}\f{\lm{B(x,r)\cap E}}{r^d}>0\Bigr\}.
\end{align*}
We denote the topological interior, boundary and closure by \(\ti E,\ \partial E,\ \tc E\).
Note that for finite unions of cubes the measure theoretic boundary, closure and interior agree with the respective topological quantities.

As in \cite{weigt2020variation}, our approach to the variation is the coarea formula.

\begin{lem}[Theorem~3.40 in \cite{MR1857292}]\label{lem_coareabv}
Let \(\Omega\subset\mathbb{R}^d\) be open.
Let \(f\in L^1_\loc(\Omega)\).
Then
\[\var_\Omega f=\int_\mathbb{R}\sm{\mb{\{f>\lambda\}\cap \Omega}}\intd\lambda.\]
\end{lem}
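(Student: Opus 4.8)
The plan is to prove the two inequalities between \(\var_\Omega f\) and \(\int_\mathbb{R}\sm{\mb{\{f>\lambda\}\cap\Omega}}\intd\lambda\) separately, after reducing the right-hand side to the relative perimeter. Write \(E_t=\{f>t\}\cap\Omega\) and let \(P(E,\Omega)\) denote the perimeter of \(E\) relative to \(\Omega\), that is \(\var_\Omega\ind E\). Since \(\Omega\) is open, for \(x\in\Omega\) the sets \(E_t\) and \(\{f>t\}\) have the same local Lebesgue density, so \(\mb{E_t}\cap\Omega=\mb{\{f>t\}}\cap\Omega\); by De Giorgi's structure theorem and Federer's theorem the reduced and measure-theoretic boundaries agree up to an \(\sm\cdot\)-null set, and \(P(E_t,\Omega)=\sm{\rb{E_t}\cap\Omega}=\sm{\mb{E_t}\cap\Omega}\). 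Here the \(\sm\cdot\) on the right-hand side of the asserted identity is understood relative to \(\Omega\), matching \(\var_\Omega f=\mu(\Omega)\): the part of \(\mb{E_t}\) lying on \(\partial\Omega\) is not counted. It therefore suffices to prove the perimeter form \(\var_\Omega f=\int_\mathbb{R}P(E_t,\Omega)\intd t\).

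For the inequality \(\leq\), fix \(\varphi\in C^1_{\tx c}(\Omega;\mathbb{R}^d)\) with \(|\varphi|\leq1\) and use the layer-cake representation \(f(x)=\int_0^\infty\ind{\{f>t\}}(x)\intd t-\int_{-\infty}^0\ind{\{f\leq t\}}(x)\intd t\). Since \(\varphi\) has compact support, \(\int_\Omega\div\varphi=0\), so the two constants drop out and Fubini gives
\[\int_\Omega f\div\varphi=\int_\mathbb{R}\Bigl(\int_\Omega\ind{\{f>t\}}\div\varphi\Bigr)\intd t\leq\int_\mathbb{R}P(E_t,\Omega)\intd t,\]
where the last step uses \(\int_\Omega\ind{E_t}\div\varphi\leq\var_\Omega\ind{E_t}\). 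Taking the supremum over admissible \(\varphi\) yields \(\var_\Omega f\leq\int_\mathbb{R}P(E_t,\Omega)\intd t\).

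The reverse inequality is the main obstacle and proceeds by smooth approximation. Choose \(f_k\in C^\infty(\Omega)\) with \(f_k\to f\) in \(L^1_\loc(\Omega)\) and \(\int_\Omega|\nabla f_k|\to\var_\Omega f\), i.e.\ strict approximation of \(\BV\) functions. For smooth \(f_k\) the classical Sard--Federer coarea formula gives \(\int_\Omega|\nabla f_k|=\int_\mathbb{R}\sm{\{f_k=t\}\cap\Omega}\intd t=\int_\mathbb{R}P(\{f_k>t\},\Omega)\intd t\), the last equality because for a.e.\ regular value \(t\) the level set is the topological boundary of the superlevel set. From \(f_k\to f\) in \(L^1_\loc\) and the pointwise identity \(\int_\mathbb{R}|\ind{\{f_k>t\}}-\ind{\{f>t\}}|\intd t=|f_k-f|\), Fubini shows that \(\ind{\{f_k>t\}}\to\ind{\{f>t\}}\) in \(L^1_\loc(\Omega)\) for a.e.\ \(t\) along a subsequence. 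Lower semicontinuity of the perimeter then gives \(P(E_t,\Omega)\leq\liminf_kP(\{f_k>t\},\Omega)\) for a.e.\ \(t\), and Fatou's lemma yields
\[\int_\mathbb{R}P(E_t,\Omega)\intd t\leq\liminf_k\int_\mathbb{R}P(\{f_k>t\},\Omega)\intd t=\lim_k\int_\Omega|\nabla f_k|=\var_\Omega f.\]

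Combining the two inequalities proves the perimeter form, and hence, via the boundary identification of the first paragraph, the stated coarea formula. The delicate points I expect are two. First, producing the smooth approximants with genuine convergence of the total variation rather than mere \(L^1\) convergence: with only \(L^1\) convergence, lower semicontinuity delivers just the easy inequality, so the strict-approximation theorem is what drives the hard direction. Second, the measure-theoretic bookkeeping near \(\partial\Omega\): one must keep the entire computation relative to \(\Omega\), checking that \(\mb{E_t}\cap\Omega=\mb{\{f>t\}}\cap\Omega\) and that the relative perimeter \(P(E_t,\Omega)\) is exactly the \(\sm\cdot\)-content of the essential boundary inside \(\Omega\), so that the intersection with \(\Omega\) is implicit on the right-hand side and constant (or otherwise boundary-supported) pieces of \(f\) do not spuriously contribute.
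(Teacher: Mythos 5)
The paper does not prove this lemma at all: it is quoted directly from Ambrosio--Fusco--Pallara \cite{MR1857292} (Theorem~3.40; the same result is Theorem~5.9 in Evans--Gariepy \cite{MR3409135}), so there is no internal proof to compare against. Your argument is correct and is essentially the standard proof found in those references: the inequality \(\var_\Omega f\leq\int_\mathbb{R}P(E_t,\Omega)\intd t\) via the layer-cake decomposition and Fubini against test fields, the reverse inequality via strict smooth approximation, the classical coarea formula for smooth functions, lower semicontinuity of the perimeter and Fatou, together with Federer's identification \(P(E,\Omega)=\sm{\mb E\cap\Omega}\) (where one should note, as you implicitly do, that the case of infinite perimeter also needs Federer's criterion, and that the strict approximation theorem extends from \(L^1(\Omega)\) to \(L^1_\loc(\Omega)\) with finite variation); your reading of the right-hand side as counting only the part of the essential boundary inside \(\Omega\) is also the one the paper uses later.
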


We need the following elementary decomposition of the measure theoretic boundary of the union of two sets.

\begin{lem}[Lemma~1.7 in \cite{weigt2020variation}]\label{lem_boundaryofunion}
Let \(A,B\subset\mathbb{R}^d\) be measurable. 
Then
\[\mb{(A\cup B)}\subset\mb A\setminus\mc B\cup\mb B\setminus\mc A\cup(\mb A\cap\mb B).\]
\end{lem}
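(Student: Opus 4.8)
The plan is to argue directly from the density definitions. For a measurable set \(E\) and a point \(x\), write \(\overline D(E,x)=\limsup_{r\to0}\lm{B(x,r)\cap E}/r^d\) for the upper density of \(E\) at \(x\), and let \(E^{\mathrm c}=\mathbb{R}^d\setminus E\). By definition \(x\in\mc E\) exactly when \(\overline D(E,x)>0\), and \(x\in\mb E\) exactly when both \(\overline D(E,x)>0\) and \(\overline D(E^{\mathrm c},x)>0\) (the two \(\limsup\) conditions in the definition, the first of which concerns \(B(x,r)\setminus E=B(x,r)\cap E^{\mathrm c}\)). I would fix a point \(x\in\mb{(A\cup B)}\), extract the two density conditions it encodes, and then run a short case distinction to place \(x\) in one of the three sets on the right-hand side.

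First I would use the condition coming from the complement. Since \((A\cup B)^{\mathrm c}=A^{\mathrm c}\cap B^{\mathrm c}\) is contained in both \(A^{\mathrm c}\) and \(B^{\mathrm c}\), monotonicity of Lebesgue measure gives \(\overline D(A^{\mathrm c}\cap B^{\mathrm c},x)\le\overline D(A^{\mathrm c},x)\) and the analogous bound with \(B\); as the left-hand side is positive because \(x\in\mb{(A\cup B)}\), I obtain that \(\overline D(A^{\mathrm c},x)>0\) and \(\overline D(B^{\mathrm c},x)>0\) both hold. Next I would use the condition coming from the set itself: from \(B(x,r)\cap(A\cup B)=(B(x,r)\cap A)\cup(B(x,r)\cap B)\) and subadditivity of Lebesgue measure, taking \(\limsup\) and using \(\limsup(g+h)\le\limsup g+\limsup h\), one gets \(\overline D(A\cup B,x)\le\overline D(A,x)+\overline D(B,x)\). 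Since \(x\in\mb{(A\cup B)}\) forces the left-hand side to be positive, at least one of \(\overline D(A,x)\) and \(\overline D(B,x)\) is positive.

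Finally I would combine these observations in a three-way case distinction. If both \(\overline D(A,x)>0\) and \(\overline D(B,x)>0\), then together with the complement densities from the first step we have \(x\in\mb A\) and \(x\in\mb B\), hence \(x\in\mb A\cap\mb B\). If \(\overline D(A,x)>0\) but \(\overline D(B,x)=0\), then \(x\in\mb A\) while \(x\notin\mc B\), so \(x\in\mb A\setminus\mc B\); the remaining case is symmetric and yields \(x\in\mb B\setminus\mc A\). These cases are exhaustive, which proves the claimed inclusion.

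I expect no substantial obstacle here. The only points that require care are using the two inequalities in the correct directions — subadditivity to bound \(\overline D(A\cup B,x)\) from above, and monotonicity to bound the two complement densities from below — and checking that the three cases genuinely exhaust all possibilities, given that at least one of the densities of \(A\) and \(B\) at \(x\) is positive.
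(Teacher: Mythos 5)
Your proof is correct and complete: the direct density argument — extracting the two \(\limsup\) conditions at a point of \(\mb{(A\cup B)}\), using monotonicity for the complement densities and subadditivity for the densities of \(A\) and \(B\), then the three-way case split — is exactly the ``straightforward'' argument this paper alludes to when it defers the proof to Lemma~1.7 of the cited reference. No gaps; the case analysis is exhaustive and each case lands in the intended piece of the right-hand side.
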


The proof of \Cref{lem_boundaryofunion} is straightforward and can be found in \cite{weigt2020variation}.

A central tool is the relative isoperimetric inequality.
In Theorem~5.11 in \cite{MR3409135} it is stated for balls,
but it also holds for cubes, see Theorem~107 in \cite{lecturenoteshajlasz}.

\begin{lem}\label{lem_rii}
Let \(Q,E\subset\mathbb{R}^d\) be a cube and a measurable set with \(\lm{E\cap Q}\leq\f12\lm Q\).
Then
\[\lm{E\cap Q}^{d-1}\lesssim\sm{\mb E\cap\ti Q}^d.\]
\end{lem}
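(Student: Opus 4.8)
The plan is to deduce this relative isoperimetric inequality from the Sobolev--Poincar\'e inequality for \(\BV\) functions on a cube, applied to the characteristic function \(u=\ind E\). If \(\sm{\mb E\cap\ti Q}=\infty\) there is nothing to prove, so I assume this quantity is finite; then \(E\) has finite perimeter in \(\ti Q\) and the structure theory for sets of finite perimeter applies. By Federer's theorem the measure-theoretic boundary and the reduced boundary agree up to an \(\mathcal H^{d-1}\)-null set, so that \(\var_{\ti Q}\ind E=\sm{\rb E\cap\ti Q}=\sm{\mb E\cap\ti Q}\).

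Next I would invoke the \(\BV\) Poincar\'e--Sobolev inequality on the interior of the cube: there is a constant \(C_d\), independent of the sidelength of \(Q\), with
\[\Bigl(\int_{\ti Q}|u-u_Q|^{d/(d-1)}\Bigr)^{(d-1)/d}\leq C_d\var_{\ti Q}u,\qquad u_Q=\avint_Q u.\]
Scale invariance of the constant is exactly why the exponent \(d/(d-1)\) appears: rescaling \(Q\) to the unit cube multiplies the variation and the \(L^{d/(d-1)}\)-norm by the same power of the sidelength, so the two sides transform identically.

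Then I would estimate the left-hand side from below. Write \(\theta=u_Q=\lm{E\cap Q}/\lm Q\), so \(\theta\leq\f12\) by hypothesis. On \(E\cap Q\) the integrand equals \((1-\theta)^{d/(d-1)}\) and on \(Q\setminus E\) it equals \(\theta^{d/(d-1)}\); dropping the second, nonnegative contribution and using \(1-\theta\geq\f12\) gives
\[\int_{\ti Q}|u-u_Q|^{d/(d-1)}\geq(1-\theta)^{d/(d-1)}\lm{E\cap Q}\geq 2^{-d/(d-1)}\lm{E\cap Q}.\]
Combining the two displays yields \(\lm{E\cap Q}^{(d-1)/d}\lesssim\var_{\ti Q}u=\sm{\mb E\cap\ti Q}\), and raising to the \(d\)-th power gives the claim.

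The only step requiring real care is the scale-invariant \(\BV\) Poincar\'e inequality on the cube together with the identification \(\var_{\ti Q}\ind E=\sm{\mb E\cap\ti Q}\); both are standard consequences of the De Giorgi--Federer structure theory, and indeed an equivalent formulation for balls is Theorem~5.11 in \cite{MR3409135}. An alternative, more self-contained route avoids quoting the Poincar\'e inequality by reflecting \(E\cap Q\) across the faces of \(Q\) to build a compactly supported set of finite perimeter in \(\mathbb R^d\) and then applying the global isoperimetric inequality; the delicate bookkeeping there is controlling the perimeter contributed by the reflected boundary pieces along \(\partial Q\), which is why I would prefer the Poincar\'e route.
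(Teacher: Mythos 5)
Your proof is correct, but note that the paper does not prove this lemma at all: it quotes it from the literature (Theorem~5.11 of Evans--Gariepy \cite{MR3409135} for balls, Theorem~107 of \cite{lecturenoteshajlasz} for cubes), so your argument supplies a proof where the paper only gives a citation. Your route --- apply the scale-invariant \(\BV\) Poincar\'e--Sobolev inequality on the cube to \(u=\ind E\), identify \(\var_{\ti Q}\ind E=\sm{\mb E\cap\ti Q}\) via Federer's criterion and the structure theorem, and bound \(\int_{\ti Q}|u-u_Q|^{d/(d-1)}\) from below by \(2^{-d/(d-1)}\lm{E\cap Q}\) using \(u_Q\leq\f12\) --- is exactly the standard derivation used in those references, so in substance you are reconstructing the cited proof rather than inventing a new one; the exponent bookkeeping is right, since \(\lm{E\cap Q}^{(d-1)/d}\lesssim\sm{\mb E\cap\ti Q}\) raised to the \(d\)-th power is the stated inequality. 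The two steps you flag as delicate are indeed the only ones needing care: finiteness of \(\sm{\mb E\cap\ti Q}\) gives (locally) finite perimeter in \(\ti Q\) by Federer's criterion, and then \(\mathcal{H}^{d-1}\)-a.e.\ coincidence of \(\mb E\) with the reduced boundary turns the perimeter into the measure-theoretic boundary term. What your approach buys is self-containedness at essentially no cost: the Poincar\'e inequality for cubes that you need is the same one the paper itself invokes later (Theorem~5.10~(ii) of \cite{MR3409135}, used in the proof of \Cref{lem_mfinl1q}), so no ingredients beyond those already present in the paper are required; the paper's citation, by contrast, keeps the exposition shorter.
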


The following result from \cite{weigt2020variation} is closely related to the relative isoperimetric inequality.

\begin{lem}[Proposition~3.1 in \cite{weigt2020variation}]\label{lem_varav}
Let \(E\subset\mathbb{R}^d\) be measurable and \(Q\) a cube (or a ball) with \(\lm{E\cap Q}=\lambda\lm Q\). 
Then
\[\sm{\partial Q\setminus\mc E}\lesssim\lambda^{-\f{d-1}d}\sm{\mb E\cap\ti Q}.\]
\end{lem}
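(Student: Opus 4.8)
The plan is to split according to whether $E$ fills at most or more than half of $Q$, and to extract the factor $\lambda^{-(d-1)/d}$ directly from the relative isoperimetric inequality (\Cref{lem_rii}). Write $s=\sle(Q)$, so that $\lm Q=s^d$ and $\lm{E\cap Q}=\lambda s^d$; I may assume $\sm{\mb E\cap\ti Q}<\infty$, as otherwise there is nothing to prove. Note that $\sm{\partial Q}=2ds^{d-1}$.

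First, the easy regime $\lambda\le\frac12$. Here I would simply bound the exposed boundary by the whole boundary, $\sm{\partial Q\setminus\mc E}\le\sm{\partial Q}=2ds^{d-1}$, and apply \Cref{lem_rii} to $E$: since $\lm{E\cap Q}\le\frac12\lm Q$ it gives $(\lambda s^d)^{d-1}\lesssim\sm{\mb E\cap\ti Q}^d$, that is, $s^{d-1}\lesssim\lambda^{-(d-1)/d}\sm{\mb E\cap\ti Q}$. Combining the two inequalities closes this case.

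The substance is the regime $\lambda>\frac12$, where $\lambda^{-(d-1)/d}$ is comparable to $1$ and one must genuinely show that the exposed boundary is small. Here I pass to the complement $F=Q\setminus E$, which satisfies $\lm F=(1-\lambda)s^d\le\frac12\lm Q$ and $\mb F\cap\ti Q=\mb E\cap\ti Q$. Two observations drive the argument. (i) Up to the $\sm{\cdot}$-null set of edges of $Q$, every exposed point lies in $\mc F$, because where $E$ has density $0$ the complement $F$ has density $\tfrac12$ on the relative interior of a face. (ii) I slice in each coordinate direction $e_j$ and use the one-dimensional dichotomy on columns $\ell_y=\{y\}\times[0,s]$: either the slice $F_y$ has an essential boundary point in the open column $(0,s)$ -- and, summed against the BV slicing of the perimeter, the measure of such $y$ is at most $\sm{\mb F\cap\ti Q}=\sm{\mb E\cap\ti Q}$ -- or the whole column lies in $F$, i.e.\ misses $E$ entirely. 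An exposed face point whose column is \emph{not} full forces at least one such interior crossing, since $F_y$ is present at the endpoint but is not all of $[0,s]$; and the full columns project to a set $B_j\subset[0,s]^{d-1}$ with $\lm{B_j}\le(1-\lambda)s^{d-1}$, because $E$ avoids $\pi_j^{-1}(B_j)$ while $\lm{E\cap Q}=\lambda s^d$. Summing over the $2d$ faces yields $\sm{\partial Q\setminus\mc E}\lesssim\sm{\mb E\cap\ti Q}+(1-\lambda)s^{d-1}$.

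It then remains to absorb the full-column term. Applying \Cref{lem_rii} to $F$ (legitimate since $\lm F\le\frac12\lm Q$) gives $((1-\lambda)s^d)^{(d-1)/d}\lesssim\sm{\mb E\cap\ti Q}$; since $(d-1)/d<1$ and $1-\lambda\le1$ we have $(1-\lambda)s^{d-1}\le(1-\lambda)^{(d-1)/d}s^{d-1}\lesssim\sm{\mb E\cap\ti Q}$, and because $\lambda^{-(d-1)/d}\ge1$ the two regimes combine to the claimed bound. The ball case is analogous, with chords in place of columns. The step I expect to be most delicate is the measure-theoretic identification in (ii): matching the $d$-dimensional condition ``$(y,0)\notin\mc E$'' on a face with the one-dimensional behaviour of the slice $F_y$ at the endpoint for a.e.\ $y$. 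This is where one must invoke the fine slicing and trace theory of BV functions (Vol'pert's theorem) rather than a naive Fubini argument, since a single slice does not see a $d$-dimensional density condition; everything else is bookkeeping together with the two applications of \Cref{lem_rii}.
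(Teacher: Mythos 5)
You should first note a structural point: this paper does not prove \Cref{lem_varav} at all — it imports it verbatim as Proposition~3.1 of \cite{weigt2020variation} — so your proposal can only be compared against the proof in that reference. That proof also rests on the relative isoperimetric inequality, but deploys it through a covering argument at stopping radii rather than through slicing: for each exposed point \(x\in\partial Q\setminus\mc E\) one picks a radius \(r_x\) at which the relative density of \(E\) in \(B(x,r_x)\) equals a fixed dimensional multiple of \(\lambda\) (possible since the density vanishes at scale \(0\) and is \(\gtrsim\lambda\) at scale \(\sle(Q)\)), applies the relative isoperimetric inequality in each such ball, and concludes with a Vitali-type covering theorem; this produces the factor \(\lambda^{-\f{d-1}d}\) in one stroke and treats cubes and balls on an equal footing. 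Your route is genuinely different: a split at \(\lambda=\f12\), with the small-\(\lambda\) regime handled by \(\sm{\partial Q\setminus\mc E}\le\sm{\partial Q}\) together with \Cref{lem_rii} applied to \(E\), and the large-\(\lambda\) regime handled by slicing the complement \(F=Q\setminus E\) into columns, counting crossing columns against \(\sm{\mb E\cap\ti Q}\) and full columns against \(\lm F\), the latter being absorbed by a second application of \Cref{lem_rii} to \(F\). This is correct, and for cubes — the only case this paper actually uses — it is arguably more elementary, since it needs the isoperimetric inequality only at the single scale \(\sle(Q)\); what it gives up is the ball case, where your columns become chords that degenerate near tangential directions, while the covering proof is insensitive to this.

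Two remarks on the step you flag as delicate; it closes more cheaply than you expect, and no trace theory is needed. Let \(D\) be the set of face points \(y\) whose column essentially misses \(F\), i.e.\ is essentially contained in \(E\). By Fubini, \(D\times(0,\sle(Q))\) is contained in \(E\) up to a Lebesgue-null set, so at every \(\H^{d-1}\)-density point \(y\) of \(D\) the set \(E\) has positive lower density at \((y,0)\); hence \(\H^{d-1}\)-almost every point of \(D\) lies in \(\mc E\), and the exposed points sitting over \(D\) form an \(\H^{d-1}\)-null set. The remaining exposed points sit over columns where \(F_y\) has positive measure, and for those your one-dimensional dichotomy (interior essential boundary point of the slice, or full column) is exactly what Vol'pert's slicing theorem provides. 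For that theorem to apply you should also record that \(\sm{\mb E\cap\ti Q}<\infty\) forces \(E\) to have finite perimeter in \(\ti Q\) by Federer's criterion; with these two additions your proof is complete.
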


In the proof of \Cref{pro_goal} we split the variation of \(\M_\Q f\) into two pieces.
One piece can be bounded using \Cref{lem_varav}.
Bounding the second piece is the main contribution of this paper.
We formulate it as follows.

\begin{pro}\label{pro_densitylow}
Let \(f\) and \(\Q\) be as in \Cref{pro_goal}.
For each \(\lambda\in\mathbb{R}\) let \(\Q_\lambda\) be the set of maximal cubes of \(\{Q\in\Q:f_Q>\lambda\}\).
Then
\[\sum_{Q\in\Q}\int_{\lambda:Q\in\Q_\lambda,\ \lm{Q\cap\{f>\lambda\}}<2^{-d-2}\lm{Q}}\sm{\partial Q}\intd\lambda\lesssim\var_{\bigcup\{\ti Q:Q\in\Q\}} f\]
\end{pro}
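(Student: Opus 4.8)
The plan is to reduce the claim to one estimate per cube, produced by the relative isoperimetric inequality \Cref{lem_rii} together with the mean condition $f_Q>\lambda$, and then to control the overlap of these estimates. Write $E_\mu=\{f>\mu\}$. By the coarea formula \Cref{lem_coareabv} the right-hand side equals $\int_{\mathbb R}\sm{\mb{E_\mu}\cap\bigcup\{\ti Q:Q\in\Q\}}\intd\mu$. The one structural fact I would isolate at the outset is that for each fixed $\lambda$ the cubes of $\Q_\lambda$, being the inclusion-maximal cubes of $\{Q\in\Q:f_Q>\lambda\}$, are pairwise disjoint, so every point lies in at most one cube of $\Q_\lambda$. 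Moreover, whenever the dyadic parent $\hat Q$ lies in $\Q$, membership $Q\in\Q_\lambda$ forces $f_{\hat Q}\le\lambda<f_Q$; hence along any dyadic chain within $\Q$ the admissible level sets $I_Q\seq\{\lambda:Q\in\Q_\lambda,\ \lm{Q\cap E_\lambda}<2^{-d-2}\lm Q\}$ stack without overlap, a child being active only at levels above its parent. The left-hand side of the claim is exactly $\sum_{Q\in\Q}\sm{\partial Q}\,|I_Q|$.

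For the per-cube estimate, fix $Q$ and $\lambda\in I_Q$. For every $\mu\ge\lambda$ we have $\lm{Q\cap E_\mu}\le\lm{Q\cap E_\lambda}<2^{-d-2}\lm Q\le\f12\lm Q$, so \Cref{lem_rii} applies and gives $\lm{Q\cap E_\mu}^{(d-1)/d}\lesssim\sm{\mb{E_\mu}\cap\ti Q}$; multiplying by $\lm{Q\cap E_\mu}^{1/d}\le(2^{-d-2}\lm Q)^{1/d}\lesssim\sle(Q)$ gives $\lm{Q\cap E_\mu}\lesssim\sle(Q)\,\sm{\mb{E_\mu}\cap\ti Q}$. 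The mean condition, via the layer-cake identity $(f-\lambda)_+=\int_\lambda^\infty\ind{\{f>\mu\}}\intd\mu$, yields
\[(f_Q-\lambda)\lm Q\le\int_Q(f-\lambda)_+\intd x=\int_\lambda^\infty\lm{Q\cap E_\mu}\intd\mu.\]
Combining and dividing by $\sle(Q)\approx\lm Q/\sm{\partial Q}$ gives
\[(f_Q-\lambda)\,\sm{\partial Q}\lesssim\int_\lambda^\infty\sm{\mb{E_\mu}\cap\ti Q}\intd\mu,\]
and evaluating at $\lambda=\inf I_Q$, where $f_Q-\lambda=|I_Q|$, produces $\sm{\partial Q}\,|I_Q|\lesssim\int_{\inf I_Q}^\infty\sm{\mb{E_\mu}\cap\ti Q}\intd\mu$.

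The main obstacle is that this last bound cannot be summed over $Q\in\Q$ directly: the charging regions $[\inf I_Q,\infty)\times\ti Q$ have no dimensional bound on their overlap. A sharp spike of $f$ lies in all of its dyadic ancestors simultaneously, each of which may have $E_\mu$-density below $2^{-d-2}$, so the surface of the spike—which sits at levels $\mu$ far above $f_Q$—is charged once by every nested ancestor, and pointwise bounded overlap genuinely fails. The disjointness recorded above is the only remedy, but it is available only after the level is fixed, whereas the charge reaches up to $\mu=\infty$. I would therefore localise: at each level $\mu$ assign each portion of $\mb{E_\mu}$ to the \emph{smallest} admissible cube containing it, so that the high-level boundary of a spike is billed to the small cubes that actually resolve it rather than to their large low-density ancestors, and then re-derive the isoperimetric estimate with $\mb{E_\mu}\cap\ti Q$ replaced by its intersection with the annular region left after removing the smaller admissible cubes. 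Because the admissible intervals $I_Q$ telescope along each chain, the heights $|I_Q|$ of nested cubes then add rather than multiply, and after summing over the disjoint cubes at each fixed level the total is bounded by $\int_{\mathbb R}\sm{\mb{E_\mu}\cap\bigcup\{\ti Q:Q\in\Q\}}\intd\mu=\var_{\bigcup\{\ti Q:Q\in\Q\}}f$. Making this scale-by-scale accounting precise, and in particular checking that the relative isoperimetric inequality can be localised to the annuli it creates, is the technical heart of the argument and the step I expect to be by far the hardest.
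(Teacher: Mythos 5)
You set up the right per-cube estimate and you diagnosed the right obstruction, but the step you defer as ``the technical heart'' is the entire content of \Cref{pro_densitylow}, and the mechanism you sketch for it does not work. Consider \(f=t\ind P\) with \(P\) a small dyadic cube and \(t\) large, and let \(\Q\) consist of \(P\) and its dyadic ancestors up to some much larger \(Q_0\). Let \(Q_*\) be the smallest ancestor of \(P\) that is ever admissible in your sense (so \(\lm P<2^{-d-2}\lm{Q_*}\), while no cube strictly between \(P\) and \(Q_*\) ever satisfies the density condition). For every level \(\mu<t\) we have \(\mb{\{f>\mu\}}=\partial P\subset\tc{Q_*}\). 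Under your assignment rule, an ancestor \(Q\supsetneq Q_*\) is the smallest admissible cube containing this boundary only on an interval of levels of length about \(|I_Q|\) (the next admissible cube down the chain activates at level \(f_Q\)), so the perimeter assigned to \(Q\) integrates to at most \(\sm{\partial P}\,|I_Q|\); but the per-cube bound that \(Q\) must deliver is \(\sm{\partial Q}\,|I_Q|\). The deficit factor \(\sm{\partial Q}/\sm{\partial P}=(\sle(Q)/\sle(P))^{d-1}\) is unbounded along the chain. Equivalently, the ``localised isoperimetric inequality'' you would need is simply false at most levels: \(\lm{Q\cap\{f>\mu\}}=\lm P>0\) while the annulus \(\ti Q\setminus\tc{Q_*}\) contains no boundary at all. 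In this example the single surface \(\partial P\) genuinely must be charged by all nested ancestors simultaneously, so no partition of the boundary can yield valid per-cube estimates; what must be redistributed is the mass, not the perimeter.

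That is what the paper does, and it kills the overlap by summability rather than disjointness. The stopping-time result \Cref{cla_mostmasssparse} shows that the layer-cake mass of \(Q\) above a low-density level is carried, up to a factor \(2^{d+1}\), by levels and maximal subcubes whose density is at most \(1/2\); via \Cref{cla_mostmasssparseabove} this yields
\[
\bigl(f_Q-\msl Q\Q{2^{-d-2}}\bigr)\sm{\partial Q}
\leq\f{d2^{d+2}}{\sle(Q)}\sum_{P\subsetneq Q}\int_{\msl P{\p Q}{1/2}}^{f_P}\lm{P\cap\{f>\lambda\}}\intd\lambda,
\]
which is the rigorous version of your remark that a spike's high-level mass should be billed to the small cubes that resolve it. The crucial point is the weight \(1/\sle(Q)\) attached to the charge that \(Q\) places on each \(P\): summing over \(Q\in\Q\), a fixed \(P\) is indeed charged by every ancestor --- exactly the overlap you worried about --- but \(\sum_{Q\supsetneq P}1/\sle(Q)=1/\sle(P)\) is a convergent geometric series, and \(1/\sle(P)\) is precisely the factor with which \Cref{lem_rii} (applicable since the density in \(P\) is at most \(1/2\) on the window \((\msl P{\p{\bigcup\Q}}{1/2},f_P)\)) converts \(\lm{P\cap\{f>\lambda\}}\) into \(\sm{\mb{\{f>\lambda\}}\cap\ti P}\). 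Only at the very end does a disjointness argument enter, and it concerns the small cubes \(P\), not the charging cubes \(Q\): nested cubes have disjoint windows, so at each fixed level the active \(P\) are pairwise disjoint and their perimeters sum to at most \(\sm{\mb{\{f>\lambda\}}\cap\bigcup\{\ti P:P\in\Q\}}\), whose integral in \(\lambda\) is the right-hand side by \Cref{lem_coareabv}. These two ingredients --- the mass-side stopping-time redistribution and the geometric-series absorption of ancestor overlap --- are missing from your outline, and neither is a routine completion of it.
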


\begin{proof}[Proof of \Cref{pro_goal}]
For each \(\lambda\), denote by \(\Q_\lambda\) 
the set of maximal cubes in \(\Q\) with \(f_Q>\lambda\).
Then \(\Q_\lambda\) consists of disjoint cubes and
\[\{\M_\Q f>\lambda\}=\bigcup\Q_\lambda\cup\{f>\lambda\}.\]
By \Cref{lem_coareabv,lem_boundaryofunion} we get
\begin{align*}
\var_\Omega\M_\Q f&=\int_{-\infty}^\infty\sm{\mb{\{\M_Q f>\lambda\}}\cap \Omega}\intd\lambda\\
&=\int_{-\infty}^\infty\sm{\mb{(\bigcup\Q_\lambda\cup\{f>\lambda\})}\cap \Omega}\intd\lambda\\
&\leq\int_{-\infty}^\infty\sm{\mb{\bigcup\Q_\lambda}\setminus\mc{\{f>\lambda\}}\cap \Omega}+\sm{\mb{\{f>\lambda\}}\cap \Omega}\intd\lambda\\
&\leq\int_{-\infty}^\infty\sm{\mb{\bigcup\Q_\lambda}\setminus\mc{\{f>\lambda\}}}\intd\lambda+\var_\Omega f.
\end{align*}
It remains to estimate the first summand.
We split it into two parts.
\begin{align*}
\int_{-\infty}^\infty\sm{\mb{\bigcup\Q_\lambda}\setminus\mc{\{f>\lambda\}}}\intd\lambda
&\leq\int_{-\infty}^\infty
\sum_{Q\in\Q_\lambda}\sm{\partial Q\setminus\mc{\{f>\lambda\}}}
\intd\lambda\\
&=\sum_{Q\in\Q}\int_{\lambda:Q\in\Q_\lambda}\sm{\partial Q\setminus\mc{\{f>\lambda\}}} \intd\lambda\\
&=\sum_{Q\in\Q}\int_{\lambda:Q\in\Q_\lambda,\ \lm{Q\cap\{f>\lambda\}}\geq2^{-d-2}\lm{Q}}\sm{\partial Q\setminus\mc{\{f>\lambda\}}} \intd\lambda\\
&+\sum_{Q\in\Q}\int_{\lambda:Q\in\Q_\lambda,\ \lm{Q\cap\{f>\lambda\}}<2^{-d-2}\lm{Q}}\sm{\partial Q\setminus\mc{\{f>\lambda\}}} \intd\lambda
\end{align*}

The second summand in the previous display is bounded by \Cref{pro_densitylow}.
The first summand can be bounded using \Cref{lem_varav}.

\begin{align*}
&\sum_{Q\in\Q}\int_{\lambda:Q\in\Q_\lambda,\ \lm{Q\cap\{f>\lambda\}}\geq2^{-d-2}\lm{Q}}\sm{\partial Q\setminus\mc{\{f>\lambda\}}}\intd\lambda\\
&\lesssim \sum_{Q\in\Q}\int_{\lambda:Q\in\Q_\lambda\ \lm{Q\cap\{f>\lambda\}}\geq2^{-d-2}\lm{\ti Q}}\sm{\mb{\{f>\lambda\}}\cap Q}\intd\lambda\\
&\leq \int_{-\infty}^\infty\sum_{\lambda:Q\in\Q_\lambda}\sm{\mb{\{f>\lambda\}}\cap\ti  Q}\intd\lambda\\
&\leq \int_{-\infty}^\infty\sm{\mb{\{f>\lambda\}}\cap\bigcup\{\ti Q:Q\in\Q\}}\intd\lambda\\
&=\var_{\bigcup\{\ti Q:Q\in\Q\}} f
\end{align*}

\end{proof}

\section{Proof of \texorpdfstring{\Cref{pro_densitylow}}{Proposition~2.5}}

For a finite set \(\Q\) of dyadic cubes, a dyadic cube \(Q\) and \(r\geq0\) set
\[
\msl Q\Q r
=
\min\biggl\{
\max\Bigl\{
\inf\{\lambda:\lm{\{f>\lambda\}\cap Q}<r\lm Q\}
,
\max\{f_P:P\in\Q,\ Q\subsetneq P\}
\Bigr\}
,f_Q
\biggr\}
.
\]
Then \Cref{pro_densitylow} claims
\[
\sum_{Q\in\Q}
\Bigl(
f_Q-\msl Q\Q{2^{-d-2}}
\Bigr)
\sm{\partial Q}
\intd\lambda
\lesssim
\var_{\bigcup\{\ti Q:Q\in\Q\}} f
.
\]
The part
\(
\inf
\{\lambda:\lm{\{f>\lambda\}\cap Q}<r\lm Q\}
\)
is also called the \(r\)-median of \(f\) on \(Q\).

Recall that \(\p\Omega\) denotes the set of dyadic cubes contained in \(\Omega\).
\begin{pro}\label{cla_mostmasssparse}
Let \(Q_0\) be a dyadic cube, \(\lambda_0\in\mathbb{R}\)
and \(f\in L^1(Q_0)\) with \(\lm{\{f>\lambda_0\}\cap Q_0}\leq2^{-d-1}\lm{Q_0}\).
For each \(\lambda\in\mathbb{R}\) denote by \(\Q_\lambda\) the set of all maximal cubes \(Q\subset Q_0\) with \(f_Q>\lambda\).
Then
\[
\lm{Q_0}(f_{Q_0}-\lambda_0)
\leq2^{d+1}
\int_{\lambda_0}^\infty
\lmb{
\{f>\lambda\}
\cap
\bigcup\bigl\{P\in\Q_\lambda:
\lm{\{f>\lambda\}\cap P}\leq\lm P/2
\bigr\}
}
\intd\lambda
.
\]
\end{pro}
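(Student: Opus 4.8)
The plan is to pass to the layer-cake representation and then isolate the contribution of the dense cubes. First I would dispose of the trivial case: if $f_{Q_0}\le\lambda_0$ the left-hand side is nonpositive while the right-hand side is nonnegative, so I may assume $f_{Q_0}>\lambda_0$, in which case $Q_0$ is the unique element of $\Q_\lambda$ for every $\lambda\in[\lambda_0,f_{Q_0})$. Then
\[
\lm{Q_0}(f_{Q_0}-\lambda_0)=\int_{Q_0}(f-\lambda_0)\le\int_{Q_0}(f-\lambda_0)_+=\int_{\lambda_0}^\infty\lm{\{f>\lambda\}\cap Q_0}\intd\lambda ,
\]
and since for a.e.\ point of $\{f>\lambda\}\cap Q_0$ the maximal cube through it lies in $\Q_\lambda$ (Lebesgue differentiation),
\[
\int_{\lambda_0}^\infty\lm{\{f>\lambda\}\cap Q_0}\intd\lambda=\int_{\lambda_0}^\infty\sum_{P\in\Q_\lambda}\lm{\{f>\lambda\}\cap P}\intd\lambda .
\]
Let $\mathrm L$ collect the summands with $\lm{\{f>\lambda\}\cap P}\le\lm{P}/2$ and $\mathrm H$ the remaining (``dense'') ones. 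The target becomes the stronger inequality $\mathrm L+\mathrm H\le2^{d+1}\mathrm L$, so everything reduces to controlling the dense cubes by $\mathrm H\le(2^{d+1}-1)\mathrm L$.

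Two structural facts drive the estimate. Since the hypothesis gives $\lm{\{f>\lambda\}\cap Q_0}\le2^{-d-1}\lm{Q_0}<\lm{Q_0}/2$ for all $\lambda\ge\lambda_0$, the cube $Q_0$ itself is never dense, so every dense cube is a proper descendant of $Q_0$ and occurs only for $\lambda\ge f_{Q_0}$; in particular the entire range $[\lambda_0,f_{Q_0})$ contributes only to $\mathrm L$. Secondly, the same computation applied to a child $P_i$ of $Q_0$ shows $\lm{\{f>\lambda\}\cap P_i}\le\lm{\{f>\lambda\}\cap Q_0}\le2^{-d-1}\lm{Q_0}=\lm{P_i}/2$, so a child of $Q_0$, whenever it is maximal, is again at most half full and hence contributes to $\mathrm L$. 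Thus density can appear only at least two generations below a cube on which the hypothesis holds, and the factor $2^{-d-1}$ is exactly what forces children to be at most half full; this is the source of the reciprocal constant $2^{d+1}$.

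Accordingly I would argue by induction on the number of dyadic scales on which $f$ is nonconstant (reducing to this case by a standard approximation). For $\lambda\ge f_{Q_0}$ the maximal cubes in $Q_0$ are precisely the maximal cubes inside its children, so $\mathrm H$ and the part of $\mathrm L$ above $f_{Q_0}$ split as a sum over the children $P_i$ of the corresponding quantities for the sub-problem on $P_i$ with base level $f_{Q_0}$. On the children where the density hypothesis persists at level $f_{Q_0}$ I apply the inductive hypothesis directly. On the remaining children, where the density at $f_{Q_0}$ lies in $(2^{-d-1},\tfrac12]$, I advance the base level to the threshold $\mu_i=\inf\{\lambda:\lm{\{f>\lambda\}\cap P_i}\le2^{-d-1}\lm{P_i}\}$ where the hypothesis is restored, apply the inductive hypothesis on $[\mu_i,\infty)$, and treat the intermediate band $[f_{Q_0},\mu_i)$ separately, noting that there $P_i$ is still at most half full and so contributes to $\mathrm L$ while it is maximal.

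The hard part is precisely this intermediate band and, more generally, verifying that the dense mass is charged to the genuinely sparse cubes with total multiplicity at most $2^{d+1}$. The mechanism is already visible for a single tall spike: a dense column of measure $\varepsilon$ (dense at every scale) is balanced against the cube of measure $2\varepsilon$ one step up, which sits at density exactly $1/2$ and is therefore sparse over a long range of levels, capturing the spike's mass. Making this quantitative is the crux — one must show that the bands produced at successive scales telescope and that the constant accumulated along the recursion does not exceed $2^{d+1}$. The clean bound $\sum_{P\in\Q_\lambda,\ \text{dense}}\lm{\{f>\lambda\}\cap P}\le\lm{\{f>\lambda\}\cap Q_0}$, together with the $2^{-d-1}$ density threshold restored at each $\mu_i$, is what keeps this accounting within the required factor.
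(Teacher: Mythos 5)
Your reduction to the ``stronger inequality'' $\mathrm L+\mathrm H\le2^{d+1}\mathrm L$ is where the argument breaks: that inequality is false, so no amount of work on the inductive accounting can complete the proof along this route. Since $\mathrm L+\mathrm H=\int_{Q_0}(f-\lambda_0)_+$, your target asserts that the \emph{positive part} of the mass is controlled by the sparse mass, whereas the proposition only claims this for the \emph{signed} quantity $\lm{Q_0}(f_{Q_0}-\lambda_0)=\int_{Q_0}(f-\lambda_0)$; the difference $\int_{Q_0}(\lambda_0-f)_+$ is exactly what you threw away in your first display, and it cannot be thrown away. Concretely, take $d=1$, $Q_0=[0,1)$, $\lambda_0=0$, $\delta=2^{-k}$ small, and
\[
f=M\ind{[0,\delta)}+\tfrac M2\ind{[\delta,1/4)}-\tfrac{3M}8\ind{[1/4,1/2)}.
\]
Then $\lm{\{f>0\}\cap Q_0}=\tfrac14=2^{-d-1}$ and $f_{Q_0}=M\delta+\tfrac M{32}>0$, so the hypotheses and your reduction both apply. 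For $\lambda\in[2M\delta+\tfrac M{16},\tfrac M2)$ the maximal cube is $[0,\tfrac14)$, which is \emph{entirely} contained in $\{f>\lambda\}$ (density $1$), so this whole band of length $\approx\tfrac{7M}{16}$ feeds $\mathrm H$; the only sparse levels are $[0,2M\delta+\tfrac M{16})$ (maximal cubes $Q_0$ and $[0,\tfrac12)$, densities $\tfrac14$ and $\tfrac12$) and $[\tfrac M2,\tfrac{3M}4)$ (cubes above the spike, carrying only mass $\delta$). Letting $\delta\to0$ one gets $\mathrm H\to\tfrac{7M}{64}$ and $\mathrm L\to\tfrac M{64}$, so $\mathrm H/\mathrm L\to7>3=2^{d+1}-1$ and $\mathrm L+\mathrm H\to\tfrac M8>\tfrac M{16}=2^{d+1}\mathrm L$. (The proposition itself survives, as it must: $f_{Q_0}\approx\tfrac M{32}\le 2^{d+1}\mathrm L\approx\tfrac M{16}$.)

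The moral is that the negative part of $f$ is not a nuisance to be discarded but the engine of the estimate. The paper's proof never passes to positive parts: it defines $\P$ as the set of dyadic $Q\subset Q_0$ with $\lm{\{f>f_Q\}\cap Q}>2^{-d-1}\lm Q$, takes the maximal such cubes $\tilde\P$, and applies to each $Q\in\tilde\P$ the \emph{signed} layer-cake identity
\[
\int_{\lambda_0}^\infty\lm{\{f>\lambda\}\cap Q}\intd\lambda-\int_{-\infty}^{\lambda_0}\lm{\{f<\lambda\}\cap Q}\intd\lambda=\lm Q(f_Q-\lambda_0)
\le2^{d+1}\int_{\lambda_0}^{f_Q}\lm{\{f>\lambda\}\cap Q}\intd\lambda,
\]
where the last step uses the density hypothesis defining $\P$; the mass inside dense cubes at levels $\lambda>f_Q$ is thus traded, with cancellation from the negative part, for mass at levels $\lambda\le f_Q$, where $Q$ sits inside a maximal cube that is at most half full (by the parent argument you did find: the parent of a maximal cube not inside $\bigcup\tilde\P$ has density $\le2^{-d-1}$, so the cube has density $\le\tfrac12$). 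This is a single global decomposition -- no induction on scales, no approximation, and no band-by-band accounting. Beyond the false target, your proposal is also incomplete on its own terms: you explicitly leave the ``crux'' (that the recursion's constants telescope to $2^{d+1}$) unproven, so even where the plan is not contradicted it is a sketch rather than a proof.
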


\begin{proof}
Recall that
\begin{equation}\label{eq_Qlambdacoverssuperlevelset}
\int_{\lambda_0}^\infty\lm{\{f>\lambda\}}\intd\lambda=\int_{\lambda_0}^\infty\lmb{\{f>\lambda\}\cap\bigcup\Q_\lambda}\intd\lambda.
\end{equation}

Let \(\P\) be the set of dyadic cubes \(Q\subset Q_0\) with
\(\lm{\{f>f_Q\}\cap Q}>2^{-d-1}\lm Q\)
and denote by \(\tilde\P\) the set of maximal cubes of \(\P\).
Then
\begin{align}
\nonumber&\int_{\lambda_0}^\infty\lmb{\{f>\lambda\}\cap\bigcup\P}\intd\lambda
-\int_{-\infty}^{\lambda_0}\lmb{\{f<\lambda\}\cap\bigcup\P}\intd\lambda\\
\nonumber&=\sum_{Q\in\tilde\P}\biggl(\int_{\lambda_0}^\infty\lm{\{f>\lambda\}\cap Q}\intd\lambda
-\int_{-\infty}^{\lambda_0}\lm{\{f<\lambda\}\cap Q}\intd\lambda\biggr)\\
\nonumber&=\sum_{Q\in\tilde\P}\lm Q(f_Q-\lambda_0)\\
\nonumber&\leq\sum_{Q\in\tilde\P,f_Q>\lambda_0}\lm Q(f_Q-\lambda_0)\\
\nonumber&\leq2^{d+1}\sum_{Q\in\tilde\P,f_Q>\lambda_0}\lm{\{f>f_Q\}\cap Q}(f_Q-\lambda_0)\\
\nonumber&\leq2^{d+1}\sum_{Q\in\tilde\P,f_Q>\lambda_0}\int_{\lambda_0}^{f_Q}\lm{\{f>\lambda\}\cap Q}\intd\lambda\\
\label{eq_Qhasmuchmassinside}&=2^{d+1}\int_{\lambda_0}^\infty\lmb{\bigcup_{Q\in\tilde\P,f_Q\geq\lambda}\{f>\lambda\}\cap Q}\intd\lambda.
\end{align}
In the last equality we interchanged the order of summation and integration and used the disjointness of \(\tilde\Q\).
By \cref{eq_Qhasmuchmassinside} and \cref{eq_Qlambdacoverssuperlevelset} we get
\begin{align*}
\lm{Q_0}(f_{Q_0}-\lambda_0)
&=\int_{\lambda_0}^\infty\lm{\{f>\lambda\}}\intd\lambda
-\int_{-\infty}^{\lambda_0}\lm{\{f<\lambda\}}\intd\lambda\\
&=\int_{\lambda_0}^\infty\lmb{\{f>\lambda\}\cap\bigcup\P}\intd\lambda
-\int_{-\infty}^{\lambda_0}\lmb{\{f<\lambda\}\cap\bigcup\P}\intd\lambda\\
&+\int_{\lambda_0}^\infty\lmb{\{f>\lambda\}\setminus\bigcup\P}\intd\lambda
-\int_{-\infty}^{\lambda_0}\lmb{\{f<\lambda\}\setminus\bigcup\P}\intd\lambda\\
&\leq2^{d+1}\int_{\lambda_0}^\infty\lmb{\bigcup_{Q\in\tilde\P,f_Q\geq\lambda}\{f>\lambda\}\cap Q}\intd\lambda
+\int_{\lambda_0}^\infty\lmb{\{f>\lambda\}\setminus\bigcup\P}\intd\lambda\\
&\leq2^{d+1}\int_{\lambda_0}^\infty\lmb{\{f>\lambda\}\setminus\bigcup\tilde\P\cup\bigcup_{Q\in\tilde\P,f_Q\geq\lambda}\{f>\lambda\}\cap Q}\intd\lambda\\
&=2^{d+1}\int_{\lambda_0}^\infty\lmb{\{f>\lambda\}\setminus\bigcup\{Q\in\tilde\P,f_Q<\lambda\}}\intd\lambda\\
&=2^{d+1}\int_{\lambda_0}^\infty\lmb{\{f>\lambda\}\cap\bigcup\Q_\lambda\setminus\bigcup\{Q\in\tilde\P,f_Q<\lambda\}}\intd\lambda\\
&\leq2^{d+1}\int_{\lambda_0}^\infty\lmb{\{f>\lambda\}\cap\bigcup\{Q\in\Q_\lambda:\lnot\exists P\in\tilde\P\ Q\subsetneq P\}}\intd\lambda
.
\end{align*}
It remains to show that
if \(Q\in\P\) such that for all \(P\in\tilde\P\) we do not have \(Q\subsetneq P\),
then for all
\(
\max\bigl\{
\lambda_0,
\max\{
f_P:P\subset Q_0,P\supsetneq Q
\}
\bigr\}
<\lambda<f_Q
\)
we have
\(\lm{\{f>f_Q\}\cap Q}\leq\lm Q/2\).
If \(Q=Q_0\) then this is true by assumption.
If \(Q\subsetneq Q_0\) then the dyadic parent \(P\) of \(Q\) is contained in \(Q_0\)
and not in \(\P\),
and since \(Q\in\Q_\lambda\) we have \(\lambda>f_P\).
Therefore
\begin{align*}
\lm{\{f>\lambda\}\cap Q}
&\leq\lm{\{f>\lambda\}\cap P}\\
&\leq\lm{\{f>f_P\}\cap P}\\
&\leq2^{-d-1}\lm P\\
&=\f{\lm Q}2.
\end{align*}
\end{proof}

\begin{rem}
By the Lebesgue differentiation theorem we have that \(\bigcup\Q\) contains almost all of \(Q_0\).
Thus some terms in the proof of \Cref{cla_mostmasssparse} are actually \(0\).
\end{rem}

\begin{cor}\label{cla_mostmasssparseabove}
Let \(Q_0\) be a dyadic cube, \(\lambda_0\in\mathbb{R}\)
and \(f\in L^1(Q_0)\) with \(\lm{\{f>\lambda_0\}\cap Q_0}\leq2^{-d-2}\lm{Q_0}\).
Then
\[
\lm{Q_0}
(f_{Q_0}-\lambda_0)
\leq2^{d+2}
\sum_{P\subsetneq Q_0}
\int_{\msl P{\p{Q_0}}{1/2}}^{f_P}\lm{P\cap\{f>\lambda\}}\intd\lambda
.
\]
\end{cor}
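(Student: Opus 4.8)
The plan is to derive the corollary from \Cref{cla_mostmasssparse}, exploiting that its density hypothesis is here strengthened from \(2^{-d-1}\) to \(2^{-d-2}\); this extra factor \(2\) of slack is exactly what lets us absorb the self-contribution of \(Q_0\) into the left-hand side. Since \(f\in L^1(Q_0)\) and \(\lm{Q_0}<\infty\), the number \(\lm{Q_0}(f_{Q_0}-\lambda_0)\) is finite, and if \(f_{Q_0}\leq\lambda_0\) the left-hand side is nonpositive while the right-hand side is nonnegative, so we may assume \(f_{Q_0}>\lambda_0\). First I would apply \Cref{cla_mostmasssparse} to \(Q_0\) and \(\lambda_0\), which is legitimate because \(2^{-d-2}\lm{Q_0}\leq2^{-d-1}\lm{Q_0}\).

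Next I rewrite the integral produced by \Cref{cla_mostmasssparse} as a sum over cubes. Because \(\Q_\lambda\) consists of pairwise disjoint cubes, the integrand
\[\lmb{\{f>\lambda\}\cap\bigcup\{P\in\Q_\lambda:\lm{\{f>\lambda\}\cap P}\leq\lm P/2\}}\]
equals \(\sum_P\lm{\{f>\lambda\}\cap P}\), the sum being over those \(P\in\Q_\lambda\) satisfying the density condition. Interchanging the nonnegative sum and the integral by Tonelli turns the expression into \(\sum_{P\subset Q_0}\int_{I_P}\lm{\{f>\lambda\}\cap P}\intd\lambda\), where \(I_P\) is the set of \(\lambda\geq\lambda_0\) for which \(P\in\Q_\lambda\) and \(\lm{\{f>\lambda\}\cap P}\leq\lm P/2\). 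The technical heart is identifying \(I_P\): membership in \(\Q_\lambda\) means \(f_P>\lambda\) together with maximality \(\lambda\geq\max\{f_{P'}:P'\in\p{Q_0},\ P\subsetneq P'\}\), while the density condition is equivalent to \(\lambda\) lying above the \(1/2\)-median of \(f\) on \(P\). Combined with the truncation at \(f_P\) built into the outer \(\min\), these describe \(I_P\) as the interval with endpoints \(\msl P{\p{Q_0}}{1/2}\) and \(f_P\). Crucially, for a proper subcube \(P\subsetneq Q_0\) the cube \(Q_0\) itself competes in the maximality condition, so \(\max\{f_{P'}:P'\in\p{Q_0},\ P\subsetneq P'\}\geq f_{Q_0}\geq\lambda_0\); hence whenever \(I_P\) is nonempty its left endpoint already exceeds \(\lambda_0\), and the lower cutoff \(\lambda_0\) from \Cref{cla_mostmasssparse} is automatically absorbed into \(\msl P{\p{Q_0}}{1/2}\). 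This produces exactly the sum \(S\seq\sum_{P\subsetneq Q_0}\int_{\msl P{\p{Q_0}}{1/2}}^{f_P}\lm{P\cap\{f>\lambda\}}\intd\lambda\) of the claim, plus the single term coming from \(P=Q_0\).

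Finally I isolate the \(P=Q_0\) term \(T_0=\int_{\lambda_0}^{f_{Q_0}}\lm{\{f>\lambda\}\cap Q_0}\intd\lambda\); for \(\lambda\geq\lambda_0\) the density of \(Q_0\) is below \(2^{-d-2}<1/2\), so \(Q_0\) always contributes. Here the strengthened hypothesis pays off: for \(\lambda\geq\lambda_0\) we have \(\lm{\{f>\lambda\}\cap Q_0}\leq\lm{\{f>\lambda_0\}\cap Q_0}\leq2^{-d-2}\lm{Q_0}\), whence \(T_0\leq2^{-d-2}\lm{Q_0}(f_{Q_0}-\lambda_0)\). Substituting into \Cref{cla_mostmasssparse} yields
\[\lm{Q_0}(f_{Q_0}-\lambda_0)\leq2^{d+1}(T_0+S)\leq\tfrac12\lm{Q_0}(f_{Q_0}-\lambda_0)+2^{d+1}S,\]
and since \(\lm{Q_0}(f_{Q_0}-\lambda_0)\) is finite I subtract \(\tfrac12\lm{Q_0}(f_{Q_0}-\lambda_0)\) to get \(\lm{Q_0}(f_{Q_0}-\lambda_0)\leq2^{d+2}S\), which is the corollary.

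I expect the main obstacle to be the bookkeeping of the middle paragraph: matching the three constraints defining \(I_P\) (super-level, maximality, density) to the single formula \(\msl P{\p{Q_0}}{1/2}\), and in particular being careful about the median convention, i.e.\ the strict versus non-strict inequality in the density condition of \Cref{cla_mostmasssparse} against the \(\inf\{\lambda:\lm{\{f>\lambda\}\cap P}<\lm P/2\}\) appearing in \(\msl{\cdot}{\cdot}{\cdot}\), so that the interval endpoints line up. Once \(I_P\) is correctly identified, both the automatic absorption of the \(\lambda_0\)-cutoff for proper subcubes and the final absorption of \(T_0\) are immediate.
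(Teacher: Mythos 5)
Your proposal is correct and takes essentially the same route as the paper's own proof: apply \Cref{cla_mostmasssparse} at \(\lambda_0\) (legitimate since \(2^{-d-2}\leq2^{-d-1}\)), use disjointness of \(\Q_\lambda\) to rewrite the integral as a sum of interval integrals identified via \(\msl P{\p{Q_0}}{1/2}\), and absorb the \(Q_0\) self-contribution, which the strengthened hypothesis bounds by \(2^{-d-2}\lm{Q_0}(f_{Q_0}-\lambda_0)\), into the left-hand side. The only cosmetic difference is that the paper performs this absorption up front, writing \(\lm{Q_0}(f_{Q_0}-\lambda_0)\leq2\bigl[\lm{Q_0}(f_{Q_0}-\lambda_0)-2^{d+1}\int_{\lambda_0}^{f_{Q_0}}\lm{\{f>\lambda\}\cap Q_0}\intd\lambda\bigr]\) before invoking \Cref{cla_mostmasssparse}, whereas you subtract at the end (correctly noting the finiteness needed to do so); the two arrangements are algebraically equivalent.
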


\begin{proof}
We have
\[
\int_{\lambda_0}^{f_{Q_0}}
\lm{\{f>\lambda\}\cap Q_0}
\intd\lambda
\leq
\int_{\lambda_0}^{f_{Q_0}}2^{-d-2}
\lm{Q_0}
\intd\lambda
=2^{-d-2}
\lm{Q_0}
(f_{Q_0}-\lambda_0)
\]
so that we get
\[\lm{Q_0}(f_{Q_0}-\lambda_0)\leq2\Bigl[\lm{Q_0}(f_{Q_0}-\lambda_0)-2^{d+1}\int_{\lambda_0}^{f_{Q_0}}\lm{\{f>\lambda\}\cap Q_0}\intd\lambda\Bigr].\]
Since \(\lm{\{f>\lambda_0\}\cap Q_0}\leq2^{-d-2}\lm{Q_0}<2^{-d-1}\lm{Q_0}\) we can apply \Cref{cla_mostmasssparse}
to estimate \(\lm{Q_0}(f_{Q_0}-\lambda_0)\) on the right hand side of the previous display and obtain
\begin{align*}
\lm{Q_0}(f_{Q_0}-\lambda_0)
&\leq2\Bigl[\lm{Q_0}(f_{Q_0}-\lambda_0)-2^{d+1}\int_{\lambda_0}^{f_{Q_0}}\lm{\{f>\lambda\}\cap Q_0}\intd\lambda\Bigr]\\
&\leq2
\Bigl[
2^{d+1}
\int_{\lambda_0}^\infty
\lmb{
\{f>\lambda\}
\cap
\bigcup\bigl\{P\in\Q_\lambda:
\lm{\{f>\lambda\}\cap P}\leq\lm P/2
\bigr\}
}
\intd\lambda
\\
&\qquad
-
2^{d+1}\int_{\lambda_0}^{f_{Q_0}}\lm{\{f>\lambda\}\cap Q_0}\intd\lambda
\Bigr]\\
&\leq2^{d+2}
\int_{f_{Q_0}}^\infty
\lmb{
\{f>\lambda\}
\cap
\bigcup\bigl\{P\in\Q_\lambda:
\lm{\{f>\lambda\}\cap P}\leq\lm P/2
\bigr\}
}
\intd\lambda
\\
&=2^{d+2}
\int_{f_{Q_0}}^\infty
\sum_{P\in\Q_\lambda:
\lm{\{f>\lambda\}\cap P}\leq\lm P/2
}
\lm{
P
\cap
\{f>\lambda\}
}
\intd\lambda
\\
&=2^{d+2}
\int_{f_{Q_0}}^\infty
\sum_{P\subsetneq Q_0:
\msl P{\p{Q_0}}{1/2}<\lambda<f_P
}
\lm{
P
\cap
\{f>\lambda\}
}
\intd\lambda
\\
&=2^{d+2}
\sum_{P\subsetneq Q_0}
\int_{\msl P{\p{Q_0}}{1/2}}^{f_P}
\lm{P\cap\{f>\lambda\}}
\intd\lambda
.
\end{align*}
\end{proof}

\begin{proof}[Proof of \Cref{pro_densitylow}]
Let \(Q\in\Q\) with \(\msl Q\Q{2^{-d-2}}<f_Q\).
Then every \(\lambda>\msl Q\Q{2^{-d-2}}\) satisfies the premise of \Cref{cla_mostmasssparseabove}
so that
\begin{align*}
\int_{\lambda:Q\in\Q_\lambda,\ \lm{Q\cap\{f>\lambda\}}<2^{-d-2}\lm Q}\sm{\partial Q}\intd\lambda
&=
(f_Q-\msl Q\Q{2^{-d-2}})\sm{\partial Q}
\\
&\leq
\f{d2^{d+2}}{\sle(Q)}
\sum_{P\subsetneq Q}
\int_{\msl P{\p Q}{1/2}}^{f_P}\lm{P\cap\{f>\lambda\}}\intd\lambda
.
\end{align*}
By \(\lm{\{f>\lambda\}\cap P}\leq\lm{P}/2\) and \Cref{lem_rii} we have
\[
\f{\lm{\{f>\lambda\}\cap P}}{\sle(P)}
\leq
\lm{\{f>\lambda\}\cap P}^{1-\f1d}
\lesssim
\sm{\partial\{f>\lambda\}\cap\ti P}
.
\]
Further note that
\[
\sum_{Q\supsetneq P}\f1{\sle(Q)}
=
\f1{\sle(P)}
.
\]
Combining the previous three displays,
using \(\msl P{\p Q}{1/2}=\msl P{\p{\bigcup\Q}}{1/2}\) which follows from the assumption on \(\Q\),
and using that for \(P_2\subsetneq P_1\subsetneq Q\in\Q\)
we have \(\msl{P_2}{\p{\bigcup\Q}}{1/2}\geq\min\{f_{P_1},f_{P_2}\}\),
we can conclude
\begin{align*}
&
\sum_{Q\in\Q}
\int_{\lambda:Q\in\Q_\lambda,\ \lm{Q\cap\{f>\lambda\}}<2^{-d-2}\lm{Q}}
\sm{\partial Q}
\intd\lambda
\\
&\leq d2^{d+2}
\sum_{P:\exists Q\in\Q,P\subsetneq Q}
\int_{\msl P{\p{\bigcup\Q}}{1/2}}^{f_P}
\sum_{Q\in\Q:Q\supsetneq P}
\f{\lm{\{f>\lambda\}\cap P}}{\sle(Q)}
\intd\lambda
\\
&\lesssim
\sum_{P:\exists Q\in\Q,P\subsetneq Q}
\int_{\msl P{\p{\bigcup\Q}}{1/2}}^{f_P}
\sm{\partial\{f>\lambda\}\cap\ti P}
\intd\lambda
\\
&\leq
\int_{-\infty}^\infty
\sm{\partial\{f>\lambda\}\cap\bigcup\{\ti P:P\in\Q\}}
\intd\lambda
\\
&=
\var_{\{\ti P:P\in\Q\}} f
.
\end{align*}
This finishes the proof.
\end{proof}

\section{Approximating the full maximal operator}\label{sec_approximation}

We need an approximation result to conclude \Cref{theo_goal} from \Cref{pro_goal}.
\begin{lem}[Theorem~5.2 in \cite{MR3409135}]\label{lem_l1approx}
Let \(U\subset\mathbb{R}^d\) be an open set and \(f\in L^1_\loc(U)\).
Let \((f_n)_n\) be a sequence functions that converges to \(f\) in \(L^1_\loc(U)\).
Then
\[\var_Uf\leq\liminf_{n\rightarrow\infty}\var_Uf_n.\]
\end{lem}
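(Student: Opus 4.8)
The plan is to use the dual (variational) characterization of the variation recalled in the Setup section, namely that
\[\var_U g=\sup\Bigl\{\int_U g\div\varphi:\varphi\in C^1_{\tx c}(U;\mathbb{R}^d),\ |\varphi|\leq1\Bigr\}.\]
This exhibits $\var_U$ as the supremum of a family of linear functionals $g\mapsto\int_U g\div\varphi$, one for each admissible test field $\varphi$. The classical strategy for lower semicontinuity is then to show that each such functional is continuous under $L^1_\loc$ convergence, and to observe that a pointwise supremum of $L^1_\loc$-continuous functionals is automatically $L^1_\loc$-lower semicontinuous.

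First I would fix an admissible test field $\varphi$ and set $K=\operatorname{supp}\varphi$, which is a compact subset of $U$. Since $\div\varphi$ is continuous with support in $K$, it is bounded, so
\[\Bigl|\int_U f\div\varphi-\int_U f_n\div\varphi\Bigr|\leq\|\div\varphi\|_\infty\int_K|f-f_n|\intd x\longrightarrow0\]
as $n\rightarrow\infty$, using precisely that $f_n\rightarrow f$ in $L^1_\loc(U)$ and that $K$ is compact, so that $L^1_\loc$ convergence is genuine $L^1$ convergence on $K$. Hence $\int_U f\div\varphi=\lim_{n\rightarrow\infty}\int_U f_n\div\varphi$. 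Next, for every $n$ the definition of the variation gives $\int_U f_n\div\varphi\leq\var_U f_n$, so passing to the lower limit yields
\[\int_U f\div\varphi=\lim_{n\rightarrow\infty}\int_U f_n\div\varphi\leq\liminf_{n\rightarrow\infty}\var_U f_n.\]
Since the right-hand side is independent of $\varphi$, I would finally take the supremum over all admissible $\varphi$ on the left, which by the dual characterization equals $\var_U f$, giving $\var_U f\leq\liminf_{n\rightarrow\infty}\var_U f_n$.

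There is essentially no serious obstacle here, as this is a standard fact about the BV seminorm. The only point requiring a little care is that the functional $g\mapsto\int_U g\div\varphi$ is genuinely \emph{continuous} under $L^1_\loc$ convergence, not merely lower semicontinuous; this is what allows the limit to pass through the inequality $\int_U f_n\div\varphi\leq\var_U f_n$. That continuity hinges entirely on the compactness of $\operatorname{supp}\varphi$, which confines the relevant integral to a single fixed compact set on which the hypothesis $f_n\to f$ in $L^1_\loc(U)$ delivers true $L^1$ control.
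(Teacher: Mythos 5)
Your proof is correct, and it is precisely the standard argument for lower semicontinuity of the variation under \(L^1_\loc\) convergence: the paper does not prove this lemma itself but cites it as Theorem~5.2 of Evans--Gariepy, whose proof is exactly your argument (fix a test field \(\varphi\), pass to the limit on its compact support, then take the supremum). The only cosmetic point is that the paper's Setup section defines \(\var_U\) via the measure \(\mu\) from the Riesz representation rather than directly by the dual supremum, but the equality of the two is standard and implicit in that construction.
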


\begin{pro}\label{pro_levelsetgoal}
Let \(\Omega\) be open and \(f\in L^1_\loc(\Omega)\) with \(\var_\Omega f<\infty\).
Then
\[\int_\mathbb{R}\sm{\mb{\{x\in\Omega:\M_\Omega f>\lambda\}}}\lesssim\var_\Omega f.\]
The same holds true for \(f\in L^1_\iloc(\Omega)\) and \(\Mi_\Omega f\).
\end{pro}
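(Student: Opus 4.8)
The plan is to realize \(\M_\Omega f\) as an increasing limit of finite maximal functions to which \Cref{pro_goal} applies, and then to transfer the resulting bound to the level sets of \(\M_\Omega f\) by the lower semicontinuity of the variation together with Fatou's lemma. For the dyadic operator, let \(\mathcal D_\Omega\) be the set of dyadic cubes \(Q\) with \(\tc Q\subset\Omega\) and put
\[
\Q_n=\{Q\in\mathcal D_\Omega:\sle(Q)\geq2^{-n},\ Q\subset B(0,n)\}.
\]
Each \(\Q_n\) is finite and \(\Q_n\uparrow\mathcal D_\Omega\). Moreover \(\Q_n\) satisfies the combinatorial hypothesis of \Cref{pro_goal}: if \(P\) is dyadic with \(P\subset\bigcup_{Q\in\Q_n}Q\) and \(P\supset Q\) for some \(Q\in\Q_n\), then \(\sle(P)\geq\sle(Q)\geq2^{-n}\), \(P\subset B(0,n)\), and \(\tc P\subset\tc{\bigcup_{Q'\in\Q_n}Q'}=\bigcup_{Q'\in\Q_n}\tc{Q'}\subset\Omega\) (a finite union), so \(P\in\Q_n\). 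Since for a.e.\ \(x\in\Omega\) arbitrarily small dyadic cubes \(Q\ni x\) have \(\tc Q\subset\Omega\), the Lebesgue differentiation theorem gives \(\M_\Omega f\geq f\) a.e., and hence \(\M_{\Q_n}f\uparrow\M_\Omega f\) pointwise a.e.

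The step I expect to be the main obstacle is the uniform bound \(\var_\Omega\M_{\Q_n}f\lesssim\var_\Omega f\), because \Cref{pro_goal} requires \(f\in L^1\) while we only have \(f\in L^1_\loc\). To get around this, let \(K_n=\bigcup_{Q\in\Q_n}Q\), a compact subset of \(\Omega\), and choose a bounded open \(V_n\) with \(K_n\subset V_n\) and \(\tc{V_n}\) compact in \(\Omega\); then \(f\in L^1(V_n)\). Applying \Cref{pro_goal} to \(\Q_n\) with the open set \(V_n\) (note \(\bigcup_{Q\in\Q_n}\ti Q\subset K_n\subset V_n\)) yields \(\var_{V_n}\M_{\Q_n}f\leq C_d\var_{V_n}f\leq C_d\var_\Omega f\). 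On the open set \(\Omega\setminus K_n\) we have \(\M_{\Q_n}f=f\), since no cube of \(\Q_n\) contains a point there. As \(V_n\cup(\Omega\setminus K_n)=\Omega\) and the variation is a Radon measure, subadditivity over this open cover gives
\[
\var_\Omega\M_{\Q_n}f\leq\var_{V_n}\M_{\Q_n}f+\var_{\Omega\setminus K_n}f\leq(C_d+1)\var_\Omega f.
\]

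It remains to pass to the limit levelsetwise. Write \(E_\lambda^n=\{\M_{\Q_n}f>\lambda\}\) and \(E_\lambda=\{x\in\Omega:\M_\Omega f>\lambda\}\). The monotone convergence \(\M_{\Q_n}f\uparrow\M_\Omega f\) gives \(E_\lambda^n\uparrow E_\lambda\) and hence \(\ind{E_\lambda^n}\to\ind{E_\lambda}\) in \(L^1_\loc(\Omega)\) for every \(\lambda\); so by \Cref{lem_l1approx} applied to these characteristic functions (whose variation equals the perimeter via \Cref{lem_coareabv}) we get \(\sm{\mb{E_\lambda}\cap\Omega}\leq\liminf_{n\to\infty}\sm{\mb{E_\lambda^n}\cap\Omega}\). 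Each \(\M_{\Q_n}f\in L^1_\loc(\Omega)\), so integrating in \(\lambda\), using Fatou's lemma and then the coarea formula \Cref{lem_coareabv} for \(\M_{\Q_n}f\),
\[
\int_\mathbb R\sm{\mb{E_\lambda}\cap\Omega}\intd\lambda\leq\liminf_{n\to\infty}\int_\mathbb R\sm{\mb{E_\lambda^n}\cap\Omega}\intd\lambda=\liminf_{n\to\infty}\var_\Omega\M_{\Q_n}f\leq(C_d+1)\var_\Omega f,
\]
which is the claim. The local operator \(\Mi_\Omega\) with \(f\in L^1_\iloc(\Omega)\) is treated in the same way, replacing \(\tc Q\subset\Omega\) by \(\ti Q\subset\Omega\) in the definition of the exhausting collections; here the role of \(L^1_\iloc\) is precisely to furnish \(f\in L^1(V_n)\) on bounded sets \(V_n\) that may meet \(\partial\Omega\), which is needed because the admissible cubes need not have compact closure in \(\Omega\).
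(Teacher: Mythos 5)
For the first assertion (the operator \(\M_\Omega\), cubes with closure in \(\Omega\)) your argument is correct and rests on the same skeleton as the paper's proof: exhaust the admissible cubes by finite families to which \Cref{pro_goal} applies, then pass to the limit using the lower semicontinuity of \Cref{lem_l1approx}. The execution differs in two respects. Where the paper copes with the possible non-integrability of \(f\) on \(\Omega\) by estimating the variation only on the exhausting open sets \(\ti{\bigcup\Q^n}\) and letting these grow to \(\Omega\), you apply \Cref{pro_goal} on a precompact neighbourhood \(V_n\) of the cubes and patch with the region where \(\M_{\Q_n}f=f\), obtaining the uniform bound \(\var_\Omega\M_{\Q_n}f\leq(C_d+1)\var_\Omega f\); and where the paper truncates \(\M_\Omega f\) so that the coarea formula \Cref{lem_coareabv} applies and then uses lower semicontinuity at the level of functions, you use lower semicontinuity level set by level set together with Fatou, which avoids truncation altogether. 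Both devices are sound. One slip to repair: dyadic cubes are half-open, so \(K_n=\bigcup\Q_n\) is not compact and \(\Omega\setminus K_n\) is not open; you should require \(\tc{K_n}=\bigcup_{Q\in\Q_n}\tc Q\subset V_n\) (possible, since this is a compact subset of \(\Omega\)) and patch with the open set \(\Omega\setminus\tc{K_n}\), on which \(\M_{\Q_n}f=f\) still holds.

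The second assertion is where your proposal has a genuine gap: the claim that \(\Mi_\Omega\) is ``treated in the same way'' fails, because your verification of the completeness hypothesis of \Cref{pro_goal} uses closures in an essential way, namely \(\tc{\bigcup\Q_n}=\bigcup_{Q\in\Q_n}\tc Q\subset\Omega\). For interiors the analogous chain breaks: \(\ti{\bigcup\Q_n}\) is in general strictly larger than \(\bigcup_{Q\in\Q_n}\ti Q\), the difference consisting of faces shared by adjacent cubes, and these faces need not lie in \(\Omega\). Concretely, let \(d=1\) and \(\Omega=(-10,10)\setminus\{1\}\). Then \([0,1)\) and \([1,2)\) have interiors in \(\Omega\), so both lie in \(\Q_n\) for large \(n\); the dyadic cube \(P=[0,2)\) satisfies \(P\subset[0,1)\cup[1,2)\) and \(P\supset[0,1)\), so the hypothesis of \Cref{pro_goal} forces \(P\in\Q_n\), yet \(\ti P\ni 1\notin\Omega\), so \(P\) is not admissible for \(\Mi_\Omega\). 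Hence no family of admissible cubes containing both \([0,1)\) and \([1,2)\) satisfies that hypothesis, and your exhaustion cannot be repaired by reordering or by adding admissible cubes. To be fair, the paper's own proof is no more careful on this point: it asserts the existence of an enumeration of the admissible cubes whose initial segments are complete in the required sense, and the same example shows that such an enumeration need not exist in the \(\Mi_\Omega\) setting. So the \(\Mi_\Omega\) case should be regarded as needing an additional idea (for instance, exploiting that within each maximal admissible cube every dyadic subcube is admissible, so completeness holds there automatically, and then controlling what happens across the interfaces), not as a one-line modification of the closure case.
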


\Cref{pro_levelsetgoal} is almost \Cref{theo_goal}.
But we also need \(\M_\Omega f\in L^1_\loc(\Omega)\)
to invoke the coarea formula \Cref{lem_coareabv}.

\begin{proof}[Proof of \Cref{pro_levelsetgoal}]
Take an enumeration \(Q_1,Q_2,\ldots\) of all dyadic cubes whose closure/interior is contained in \(\Omega\),
and such that for each \(n\in\mathbb{N}\), each dyadic cube \(Q\subset Q_1\cup\ldots\cup Q_n\) that contains a cube in \(\{Q_1,\ldots,Q_n\}\) already belongs to \(\{Q_1,\ldots,Q_n\}\).
Denote \(\Q^n=\{Q_1,\ldots,Q_n\}\).
Then \((\bigcup\Q^n)_n\) is an increasing sequence of sets.
Since \(\Omega\) is open we have
\[\bigcup_n\ti{\bigcup\Q^n}=\Omega.\]
For a function \(g\) and \(N\in\mathbb{N}\) denote the truncation of \(g\) by
\[\trunc Ng(x)=\min\{\max\{g(x),-N\},N\}.\]
Then for each \(N\in\mathbb{N}\) we have \(\trunc N{\M_\Omega f}\in L^1_\iloc(\Omega)\).
Thus by \Cref{lem_coareabv}
\begin{align*}
\int_\mathbb{R}\sm{\mb{\{x\in\Omega:\M_\Omega f>\lambda\}}}&=\lim_{N\rightarrow\infty}\int_{-N}^N\sm{\mb{\{x\in\Omega:\M_\Omega f>\lambda\}}}\\
&=\lim_{N\rightarrow\infty}\var_\Omega\trunc N{\M_\Omega f}\\
&=\lim_{N\rightarrow\infty}\lim_{n\rightarrow\infty}\var_{\ti{\bigcup\Q_n}}\trunc N{\M_\Omega f}.
\end{align*}
Furthermore \(\trunc N{\M_\Omega f}\) is the pointwise supremum of the set of countable functions \((\trunc N{f_{Q_n}})_n\)
and \(\trunc N{\M_\Omega f}\geq \trunc Nf \) a.e.\ on \(\Omega\).
Thus by monotone convergence \(\trunc N{\M_{\Q^k}f}\) converges to \(\trunc N{\M_\Omega f}\) in \(L^1_\iloc(\Omega)\) for \(k\rightarrow\infty\),
and hence for each \(n\) in \(L^1(\ti{\bigcup\Q^n})\).
Thus by \Cref{lem_l1approx}
\begin{align*}
\var_{\ti{\bigcup\Q_n}}\trunc N{\M_\Omega f}&\leq\liminf_{k\rightarrow\infty}\var_{\ti{\bigcup\Q_n}}\trunc N{\M_{\bigcup\Q^k}f}\\
&\leq\liminf_{k\rightarrow\infty}\var_{\ti{\bigcup\Q_k}}\M_{\bigcup\Q^k}f.
\end{align*}
The above up to here also holds verbatim for \(\Mi_\Omega\) in place of \(\M_\Omega\).
In the setting of \(\M_\Omega\) we have that \(\ti{\bigcup\Q_k}\) is compactly contained in \(\Omega\).
Thus \(f\in L^1(\ti{\bigcup\Q_k})\).
In the setting of \(\Mi_\Omega\) we have \(f\in L^1_\iloc(\Omega)\) so that \(f\in L^1(\ti{\bigcup\Q_k})\) follows from \(\bigcup\Q_k\) being bounded.
Thus in both settings we can invoke \Cref{pro_goal} and get
\[\var_{\ti{\bigcup\Q_k}}\M_{\bigcup\Q^k}f\lesssim\var_\Omega f,\]
which finishes the proof.
\end{proof}

\subsection{Membership of \texorpdfstring{\(L^1_\loc(\Omega)\)}{L1\_loc(Omega)}}

\begin{lem}\label{lem_mflinfae}
Let \(Q_1,Q_2,\ldots\) be an increasing sequence of cubes
and denote by \(Q_\infty\) the quadrant \(Q_1\cup Q_2\cup\ldots\).
Let \(f\in L^1_\loc(Q_\infty)\) with \(f\geq0\), \(\int_{Q_1}f<\infty\) and \(\var_{\ti{Q_\infty}} f<\infty\).
Then
\[\limsup_{n\rightarrow\infty}f_{Q_n}<\infty.\]
\end{lem}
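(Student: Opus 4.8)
The plan is to split the average $f_{Q_n}$ into a median part and an excess part, and to control both through the coarea formula and the relative isoperimetric inequality, using the finite variation $V\seq\var_{\ti{Q_\infty}}f$ as a single budget spread over all levels. Write $P_\lambda\seq\sm{\mb{\{f>\lambda\}}\cap\ti{Q_\infty}}$, so that \Cref{lem_coareabv} gives $\int_{\mathbb R}P_\lambda\intd\lambda=V<\infty$. For each $n$ let
\[
m_n\seq\inf\Bigl\{\lambda:\lm{\{f>\lambda\}\cap Q_n}<\tfrac12\lm{Q_n}\Bigr\}
\]
be the $\tfrac12$-median of $f$ on $Q_n$; it is finite because $f<\infty$ almost everywhere, and $m_n\ge0$ since $f\ge0$. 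For $\lambda>m_n$ we have $\lm{\{f>\lambda\}\cap Q_n}\le\tfrac12\lm{Q_n}$, while for $\lambda<m_n$ we have $\lm{\{f>\lambda\}\cap Q_n}\ge\tfrac12\lm{Q_n}$. From $f\le m_n+(f-m_n)_+$ and the layer-cake formula I obtain the decomposition
\[
f_{Q_n}\le m_n+\f1{\lm{Q_n}}\int_{m_n}^\infty\lm{\{f>\lambda\}\cap Q_n}\intd\lambda,
\]
so it suffices to bound the excess integral uniformly in $n$ and to prove $\limsup_n m_n<\infty$.

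To control the excess I fix $\lambda>m_n$. Then $\lm{\{f>\lambda\}\cap Q_n}\le\tfrac12\lm{Q_n}$, so \Cref{lem_rii} applies, and together with $\lm{\{f>\lambda\}\cap Q_n}^{1/d}\le\lm{Q_n}^{1/d}=\sle(Q_n)$ it yields
\[
\lm{\{f>\lambda\}\cap Q_n}\le\sle(Q_n)\,\lm{\{f>\lambda\}\cap Q_n}^{\f{d-1}d}\lesssim\sle(Q_n)\sm{\mb{\{f>\lambda\}}\cap\ti{Q_n}}\le\sle(Q_n)P_\lambda.
\]
Integrating over $\lambda\in(m_n,\infty)$ and dividing by $\lm{Q_n}=\sle(Q_n)^d$ bounds the excess part by $\lesssim\sle(Q_n)^{1-d}V$. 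Since the cubes increase, $\sle(Q_n)\ge\sle(Q_1)$, so this is at most $\max\{1,\sle(Q_1)^{1-d}\}V$, a bound uniform in $n$ (and tending to $0$ when $d\ge2$). In passing this also shows $f_{Q_n}<\infty$ for every $n$.

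The main obstacle is to show $\limsup_n m_n<\infty$, and I would do this by contradiction, using $Q_1$ as an anchor. Since $f\ge0$ and $\int_{Q_1}f<\infty$, Chebyshev's inequality supplies a level $\lambda_0$ with $\lm{\{f>\lambda_0\}\cap Q_1}<\tfrac12\lm{Q_1}$, hence $\lm{\{f\le\lambda\}\cap Q_1}>\tfrac12\lm{Q_1}$ for every $\lambda\ge\lambda_0$. Suppose now $m_n>\lambda$ for some $\lambda\ge\lambda_0$. Then $\lm{\{f>\lambda\}\cap Q_n}\ge\tfrac12\lm{Q_n}$, so the set $F\seq\{f\le\lambda\}\cap Q_n$ has $\lm F\le\tfrac12\lm{Q_n}$ and is admissible in \Cref{lem_rii} inside $Q_n$. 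As $F$ agrees with $\{f\le\lambda\}$ on $\ti{Q_n}$ we have $\mb F\cap\ti{Q_n}=\mb{\{f>\lambda\}}\cap\ti{Q_n}$, and since $\lm F\ge\lm{\{f\le\lambda\}\cap Q_1}>\tfrac12\lm{Q_1}$ the inequality becomes
\[
\lm{Q_1}^{\f{d-1}d}\lesssim\lm F^{\f{d-1}d}\lesssim\sm{\mb{\{f>\lambda\}}\cap\ti{Q_n}}\le P_\lambda.
\]
Thus a single $n$ with $m_n>\lambda$ already forces $P_\lambda\gtrsim\lm{Q_1}^{(d-1)/d}$. If $\limsup_n m_n=\infty$, then for every $\lambda\ge\lambda_0$ such an $n$ exists, so $P_\lambda\gtrsim\lm{Q_1}^{(d-1)/d}$ holds for all $\lambda\ge\lambda_0$, whence $\int_{\lambda_0}^\infty P_\lambda\intd\lambda=\infty$, contradicting $V<\infty$. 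Therefore $\limsup_n m_n<\infty$, and combined with the excess bound this gives $\limsup_n f_{Q_n}\le\limsup_n m_n+\max\{1,\sle(Q_1)^{1-d}\}V<\infty$. Note that the only structural feature used is the nestedness $Q_1\subset Q_n$, so dyadicity plays no role; the whole weight of the argument sits in the anchoring step, where concentration of mass in arbitrarily large cubes is converted, via the relative isoperimetric inequality against $Q_1$, into a level-independent perimeter lower bound incompatible with finite total variation.
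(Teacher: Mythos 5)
Your proof is correct, but it takes a genuinely different route from the paper's. The paper settles the lemma in three lines: since \(Q_1\subset Q_n\), the triangle inequality, H\"older's inequality and the Sobolev--Poincar\'e inequality for BV functions give
\[
|f_{Q_1}-f_{Q_n}|\leq\lm{Q_1}^{-1}\|f-f_{Q_n}\|_{L^1(Q_1)}\leq\lm{Q_1}^{-\f{d-1}d}\|f-f_{Q_n}\|_{L^{\f d{d-1}}(Q_n)}\lesssim\lm{Q_1}^{-\f{d-1}d}\var_{\ti{Q_\infty}}f,
\]
so every \(f_{Q_n}\) stays within a fixed distance of \(f_{Q_1}\). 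You replace this single application of the \(L^{\f d{d-1}}\)-Poincar\'e inequality by a median--excess decomposition: the excess of \(f_{Q_n}\) over the \(\tfrac12\)-median \(m_n\) is controlled by \Cref{lem_coareabv} together with \Cref{lem_rii} applied to the superlevel sets, while \(\sup_n m_n\) is bounded by playing the concentration of \(\{f\leq\lambda\}\) in \(Q_1\) (Chebyshev, using \(\int_{Q_1}f<\infty\)) against \Cref{lem_rii} applied to the sublevel sets, with the finiteness of the coarea integral supplying the contradiction. The individual steps check out: the layer-cake bound, the identification \(\mb{(\{f\leq\lambda\}\cap Q_n)}\cap\ti{Q_n}=\mb{\{f>\lambda\}}\cap\ti{Q_n}\) (locality and complementation-invariance of the measure-theoretic boundary), and the monotonicity \(\sle(Q_n)\geq\sle(Q_1)\) are all sound. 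What your route buys: it stays entirely inside the paper's own toolkit --- coarea formula plus relative isoperimetric inequality, the same two tools driving \Cref{pro_densitylow} --- instead of importing the Gagliardo--Nirenberg--Sobolev embedding; it is in the spirit of the medians \(\msl Q\Q r\) of Section~3; and it proves \(f_{Q_n}<\infty\) along the way, whereas the paper's chain tacitly presupposes \(f\in L^1(Q_n)\) for \(f_{Q_n}\) to be meaningful. What it costs: length, and your bound on the medians emerges from a contradiction rather than quantitatively --- although your perimeter lower bound in fact yields \(\sup_n m_n\leq\lambda_0+C_d\lm{Q_1}^{-\f{d-1}d}\var_{\ti{Q_\infty}}f\), since the set of levels \(\lambda\geq\lambda_0\) exceeded by some \(m_n\) is an interval on which \(P_\lambda\gtrsim\lm{Q_1}^{\f{d-1}d}\). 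One cosmetic remark: for \(d=1\) and levels with \(\lm{\{f>\lambda\}\cap Q_n}=0\), the middle step of your excess chain degenerates (\(\lm{\cdot}^{0}=1\) while the boundary may be empty); the inequality you actually use --- first term against last --- is trivially true there, so the chain should formally be restricted to levels where the superlevel set has positive measure.
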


\begin{proof}
Since \(f_{Q_1}<\infty\) it suffices to bound \(|f_{Q_n}-f_{Q_1}|\) independent of \(n\).
By the triangle inequality
and by Poincar\'e's inequality
\begin{align*}
|f_{Q_1}-f_{Q_n}|
&\leq\lm{Q_1}^{-1}\|f-f_{Q_n}\|_{L^1(Q_1)}\\
&\leq\lm{Q_1}^{-\f{d-1}d}\|f-f_{Q_n}\|_{L^{\f d{d-1}}(Q_1)}\\
&\leq\lm{Q_1}^{-\f{d-1}d}\|f-f_{Q_n}\|_{L^{\f d{d-1}}(Q_n)}\\
&\leq\lm{Q_1}^{-\f{d-1}d}\var_{\ti{Q_d}} f\\
&\leq\lm{Q_1}^{-\f{d-1}d}\var_{\ti{Q_\infty}} f\\
&<\infty.
\end{align*}
\end{proof}

\begin{proof}[Proof of \Cref{theo_goal}]
Let \(f\in L^1_\loc(\Omega)\) with \(\var_\Omega f<\infty\).
We have \(|\M_\Omega f|\leq\M_\Omega|f|\)
and \(\var_\Omega|f|\leq\var_\Omega f<\infty\).
Hence it suffices to consider the case that \(f\geq 0\).
Let \(Q\) be a dyadic cube with \(\tc Q\subset\Omega\).
Then by \Cref{lem_mflinfae}
\[c_Q=\sup_{Q\subset\tc P\subset\Omega}f_P<\infty\]
and on \(Q\) we have
\(\M_\Omega f=\max\{\M_Qf,c_Q\}\).
Thus by the maximal function theorem
\begin{align*}
\int_Q(\M_\Omega f)^{\f d{d-1}}&\leq\lm Qc_Q^{\f d{d-1}}+\int_Q(\M_Qf)^{\f d{d-1}}\\
&\leq\lm Qc_Q^{\f d{d-1}}+C_d\int_Qf^{\f d{d-1}}
\end{align*}
which is finite by Sobolev embedding.
Since every set \(U\) which is compactly contained in \(\Omega\)
can be covered by finitely many dyadic cubes which are compactly contained in \(\Omega\),
this implies \(\M_\Omega f\in L^{\f d{d-1}}_\loc(\Omega)\subset L^1_\loc(\Omega)\).
Thus \Cref{lem_coareabv} allows us to invoke \Cref{pro_levelsetgoal} and we get
\[\var_\Omega\M_\Omega f=\int_\mathbb{R}\sm{\mb{\{x\in\Omega:\M_\Omega f>\lambda\}}}\lesssim\var_\Omega f.\]
\end{proof}

\begin{rem}
For \(f\in L^1_\iloc(\Omega)\subset L^1_\loc(\Omega)\) the above arguments also show
\(\Mi_\Omega f\in L^{\f d{d-1}}_\loc(\Omega)\)
and
\(\var_\Omega\Mi_\Omega f\lesssim\var_\Omega f\).
\end{rem}
We need more careful arguments to prove
\(\Mi_\Omega f\in L^1_\iloc(\Omega)\)
and finish the proof of \Cref{theo_goalint}.

\subsection{Membership of \texorpdfstring{\(L^1_\iloc(\Omega)\)}{L1\_tildeloc(Omega)}}

Here we prove that \(\Mi_\Omega f\in L^1_\iloc(\Omega)\) if \(f\in L^1_\iloc(\Omega)\).
Note that for \(f\in L^1_\loc(\Omega)\), with minor tweaks these arguments can be used as an alternative proof of \(f\in L^1_\loc(\Omega)\).

\begin{lem}\label{lem_mfinl1q}
Let \(Q\) be a dyadic cube and \(f\in L^1(Q)\) with \(\var_Q f<\infty\) and \(f\geq0\).
Then
\[\int_Q\M_Q f\lesssim\int_Q f+\lm Q^{\f1d}\var_{\ti Q}f.\]
\end{lem}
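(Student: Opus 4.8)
The plan is to sidestep the geometric machinery of the previous sections and instead deduce the bound from the $L^{\f d{d-1}}$ boundedness of the dyadic maximal operator together with the Sobolev--Poincar\'e inequality. Write $p=\f d{d-1}$, so that its conjugate exponent is $p'=d$ (for $d=1$ one reads $p=\infty$, $p'=1$, and the argument below degenerates to the elementary one-dimensional bound). The point is that this exponent is exactly the one for which the volume factors produced by H\"older and by the (scale invariant) Sobolev--Poincar\'e inequality combine to the claimed power $\lm Q^{1/d}$.

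First I would pass to the global maximal operator. For every dyadic cube $P$ with $\tc P\subset Q$ and every $x\in P$ we have $\f1{\lm P}\int_P f=\f1{\lm P}\int_P f\ind Q\leq\M_{\mathbb R^d}(f\ind Q)(x)$, so that $\M_Q f\leq\M_{\mathbb R^d}(f\ind Q)$ pointwise on $Q$. The classical $L^p$ maximal inequality, valid for $p>1$ with a constant depending only on $p$ and hence only on $d$, then gives
\[\|\M_Q f\|_{L^p(Q)}\leq\|\M_{\mathbb R^d}(f\ind Q)\|_{L^p(\mathbb R^d)}\lesssim\|f\|_{L^p(Q)}.\]
Applying H\"older's inequality on $Q$ with the exponents $p$ and $p'=d$ yields
\[\int_Q\M_Q f\leq\lm Q^{1/d}\|\M_Q f\|_{L^p(Q)}\lesssim\lm Q^{1/d}\|f\|_{L^p(Q)}.\]

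It remains to control $\|f\|_{L^p(Q)}$, and this is where the variation enters. By the triangle inequality $\|f\|_{L^p(Q)}\leq\|f-f_Q\|_{L^p(Q)}+f_Q\lm Q^{1/p}$, and the Sobolev--Poincar\'e inequality for $\BV$ functions on a cube gives $\|f-f_Q\|_{L^p(Q)}\lesssim\var_{\ti Q}f$ with a purely dimensional constant; in particular $f\in L^p(Q)$, which justifies the maximal inequality above. Since $\lm Q^{1/p}=\lm Q^{(d-1)/d}$ and $f_Q\lm Q=\int_Q f$, combining the previous displays gives
\[\int_Q\M_Q f\lesssim\lm Q^{1/d}\bigl(\var_{\ti Q}f+f_Q\lm Q^{(d-1)/d}\bigr)=\lm Q^{1/d}\var_{\ti Q}f+\int_Q f,\]
which is the assertion.

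I do not expect a real obstacle here: the whole difficulty of the paper lies in the sharp variation estimate, whereas for mere membership in $L^1$ the sub-optimal gain of the exponent $\f d{d-1}$ is enough. The only points deserving attention are that both the maximal constant and the Sobolev--Poincar\'e constant depend on $d$ alone, and the bookkeeping of the volume powers; the endpoint $d=1$ is handled separately by replacing $L^{\f d{d-1}}$ with $L^\infty$ and Sobolev--Poincar\'e with the bound on the oscillation of a one-dimensional $\BV$ function by its variation.
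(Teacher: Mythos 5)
Your proof is correct and is essentially the paper's own argument: both rest on the \(L^{\f d{d-1}}\) boundedness of the dyadic maximal operator, H\"older's inequality, and the Gagliardo--Nirenberg--Sobolev (Sobolev--Poincar\'e) inequality on the cube, with the term \(f_Q\lm Q\) split off to produce \(\int_Q f\). The only (immaterial) difference is bookkeeping: the paper subtracts the mean before applying the maximal operator, estimating \(\int_Q\M_Q(f-f_Q)\), whereas you apply the maximal operator to \(f\) itself and subtract the mean afterwards inside \(\|f\|_{L^{d/(d-1)}(Q)}\).
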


\begin{proof}
We use
the boundedness of the maximal operator on \(L^{\f d{d-1}}\)
and
the Gagliardo-Nirenberg-Sobolev inequality \cite[Theorem~5.10~(ii)]{MR3409135} for cubes.
Split
\[\int_Q\M_Q f=\int_Q f+\int_Q\M_Q(f-f_Q)\]
and estimate
\begin{align*}
\avint_Q\M_Q(f-f_Q)
&\leq\Bigl(\avint_Q\M_Q(f-f_Q)^{\f d{d-1}}\Bigr)^{\f{d-1}d}\\
&\lesssim\Bigl(\avint_Q(f-f_Q)^{\f d{d-1}}\Bigr)^{\f{d-1}d}\\
&\lesssim\lm Q^{\f1d-1}\var_Qf.
\end{align*}
\end{proof}

\begin{lem}\label{pro_mfinl1loc}
Let \(\Omega\) be open and \(f\in L^1_\iloc(\Omega)\) with \(\var_\Omega f<\infty\).
Then \(\Mi_\Omega f\in L^1_\loc(\Omega)\).
\end{lem}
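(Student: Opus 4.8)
The plan is to localise exactly as in the proof of \Cref{theo_goal}, but to control the contribution of the small cubes through \Cref{lem_mfinl1q} rather than through Sobolev embedding. First I would reduce to \(f\geq0\): one has \(|\Mi_\Omega f|\leq\Mi_\Omega|f|\), while \(|f|\in L^1_\iloc(\Omega)\) and \(\var_\Omega|f|\leq\var_\Omega f<\infty\), so it is enough to treat \(\Mi_\Omega|f|\). Since every compact \(K\subset\Omega\) lies at positive distance from \(\partial\Omega\), it can be covered by finitely many dyadic cubes \(Q\) with \(\tc Q\subset\Omega\), and hence it suffices to bound \(\int_Q\Mi_\Omega f\) for each such cube \(Q\).

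Fix such a \(Q\). For \(x\in Q\) every dyadic cube competing in the supremum that defines \(\Mi_\Omega f(x)\) is nested with \(Q\), so I would split the supremum according to whether the competitor is contained in \(Q\) or is a strict ancestor of \(Q\). Because \(\tc Q\subset\Omega\), every dyadic subcube \(R\subseteq Q\) has \(\ti R\subset\Omega\) and is admissible, and these contribute precisely \(\M_Q f(x)\); the admissible strict ancestors \(R\supsetneq Q\) all contain \(x\) and contribute the constant \(c_Q=\sup\{f_R:R\supsetneq Q,\ \ti R\subset\Omega\}\). Thus \(\Mi_\Omega f=\max\{\M_Q f,c_Q\}\) on \(Q\), so that \(\int_Q\Mi_\Omega f\leq\int_Q\M_Q f+c_Q\lm Q\). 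The first term is finite by \Cref{lem_mfinl1q}, since \(f\in L^1(Q)\) (\(Q\) is bounded and \(f\in L^1_\iloc(\Omega)\)) and \(\var_{\ti Q}f\leq\var_\Omega f<\infty\).

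The crux is to show \(c_Q<\infty\), which is the one place where arbitrarily large cubes enter. I would observe that the admissible ancestors \(Q=Q^{(0)}\subsetneq Q^{(1)}\subsetneq\cdots\) form a chain on which the admissibility condition \(\ti{Q^{(k)}}\subset\Omega\) is monotone, since \(\ti{Q^{(k)}}\subset\ti{Q^{(k+1)}}\). Hence either only finitely many ancestors are admissible, in which case \(c_Q\) is a maximum of finitely many finite averages, or all of them are, in which case their union is a quadrant \(Q_\infty\) with \(\ti{Q_\infty}\subset\Omega\). In the latter case I would invoke \Cref{lem_mflinfae} with \(Q_1=Q\): its hypotheses \(\int_{Q_1}f<\infty\) and \(\var_{\ti{Q_\infty}}f\leq\var_\Omega f<\infty\) hold, and \(f\in L^1_\loc(Q_\infty)\) follows from \(f\in L^1_\iloc(\Omega)\)—this is exactly the point at which integrability on all bounded sets, rather than merely on compact subsets of \(\Omega\), is needed. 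The lemma yields \(\limsup_k f_{Q^{(k)}}<\infty\), whence \(c_Q=\sup_k f_{Q^{(k)}}<\infty\). Summing the bound for \(\int_Q\Mi_\Omega f\) over the finite cover of \(K\) then gives \(\Mi_\Omega f\in L^1_\loc(\Omega)\).
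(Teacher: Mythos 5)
Your proposal is a correct proof of the statement as literally written, but it is not the paper's argument, and the difference matters. You localize to a finite cover of a compact set by dyadic cubes \(Q\) with \(\tc Q\subset\Omega\), write \(\Mi_\Omega f=\max\{\M_Q f,c_Q\}\) on each such \(Q\), bound \(\int_Q\M_Q f\) by \Cref{lem_mfinl1q}, and get \(c_Q<\infty\) by applying \Cref{lem_mflinfae} to the chain of admissible ancestors of \(Q\). Your key observation is sound: admissibility \(\ti R\subset\Omega\) is inherited downward along the ancestor chain, so the admissible ancestors are either finitely many or all of them, and in the latter case their union is a quadrant \(Q_\infty\) with \(\ti{Q_\infty}=\bigcup_k\ti{Q^{(k)}}\subset\Omega\), so that \Cref{lem_mflinfae} applies (and you correctly identify that \(f\in L^1_\iloc(\Omega)\), not just \(f\in L^1_\loc(\Omega)\), is what makes \(f\in L^1_\loc(Q_\infty)\) true). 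In effect you rerun the proof of \Cref{theo_goal} with \Cref{lem_mfinl1q} substituted for the \(L^{d/(d-1)}\) maximal function theorem and Sobolev embedding. The paper's own proof is organized quite differently: it fixes an arbitrary bounded open \(U\subset\Omega\) (not necessarily compactly contained in \(\Omega\)), shows by a contradiction/diagonal argument via \Cref{lem_mflinfae} that there is a level \(\lambda_0\) for which the union of all admissible cubes meeting \(U\) with average above \(\lambda_0\) is bounded, and then combines the layer-cake formula with \Cref{lem_mfinl1q} applied to the finitely many maximal cubes at level \(\lambda_0\).

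The reason for the paper's more laborious route is that the lemma is really intended to give \(\Mi_\Omega f\in L^1_\iloc(\Omega)\), i.e.\ integrability over all bounded subsets of \(\Omega\), including those reaching up to \(\partial\Omega\); this is what the subsection title announces and what the proof of \Cref{theo_goalint} invokes (the \(L^1_\loc\) in the lemma's statement appears to be a typo). Your covering step cannot deliver that stronger conclusion: a bounded set \(U\subset\Omega\) touching \(\partial\Omega\) cannot be covered by finitely many dyadic cubes whose closures lie in \(\Omega\), and covering it by infinitely many such cubes would require summing the terms \(c_Q\lm Q\), hence a uniform control on the averages over ancestors of all these cubes---which is exactly what the paper's uniform level \(\lambda_0\) provides. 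Note also that the \(L^1_\loc\) conclusion you prove was already available in the paper: the remark following the proof of \Cref{theo_goal} records that \(\Mi_\Omega f\in L^{d/(d-1)}_\loc(\Omega)\subset L^1_\loc(\Omega)\). So your argument is correct but establishes only the weaker, already-known statement, and it would not suffice to complete the proof of \Cref{theo_goalint}.
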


\begin{proof}
We have \(|\Mi_\Omega f|\leq\Mi_\Omega|f|\)
and \(\var_\Omega|f|\leq\var_\Omega f<\infty\).
Hence it suffices to consider the case that \(f\geq 0\).
Let \(U\subset\Omega\) be open and bounded and depending on the setting with \(\tc U\subset\Omega\).
We have to show that \(\int_U\Mi_\Omega f<\infty\).
For each \(\lambda\in\mathbb{R}\) let \(\Q_\lambda'\) be the set of cubes \(Q\) that intersect \(U\) and have \(f_Q>\lambda\).
Assume that for each \(\lambda\in\mathbb{R}\) \(\bigcup\Q_\lambda'\) is unbounded.
Since \(U\) is bounded this means for each \(n\) \(\Q_n'\) contains a nested sequence of cubes whose union is a quadrant.
From those we can take a nested diagonal sequence \((Q_n)_n\) whose union is a quadrant and with \(f_{Q_n}\rightarrow\infty\).
But this contradicts \Cref{lem_mflinfae}.
Hence there is a \(\lambda_0\) such that \(\bigcup\Q_{\lambda_0}'\) is bounded.
For each \(\lambda\in\mathbb{R}\) let \(\Q_\lambda\) be the set of maximal cubes in \(\Q_\lambda'\).
Then for \(\lambda\geq\lambda_0\) we have \(\bigcup\Q_\lambda'=\bigcup\Q_\lambda\) and
\begin{align*}
\int_U\Mi_\Omega f&=\int_0^\infty\lm{\{x\in U:\Mi_\Omega f>\lambda\}}\intd\lambda\\
&\leq\int_0^{\lambda_0}\lm U\intd\lambda+\int_{\lambda_0}^\infty\sum_{Q\in\Q_{\lambda_0}}\lm{\{x\in Q:\Mi_\Omega f>\lambda\}}\intd\lambda\\
&\leq\lambda_0\lm U+\sum_{Q\in\Q_{\lambda_0}}\int_{\lambda_0}^\infty\lm{\{x\in Q:\Mi_\Omega f>\lambda\}}\intd\lambda
\end{align*}
Since \(\lm U<\infty\) it suffices to bound the second term in the previous display.
For \(x\in Q\) with \(\Mi_\Omega f(x)>\lambda_0\) we have \(\Mi_\Omega f(x)=\Mi_Qf(x)\).
Thus by \Cref{lem_mfinl1q} we have
\begin{align*}
\sum_{Q\in\Q_{\lambda_0}}\int_{\lambda_0}^\infty\lm{\{x\in Q:\Mi_\Omega f>\lambda\}}\intd\lambda
&\lesssim\sum_{Q\in\Q_{\lambda_0}}\int_Q f+\sum_{Q\in\Q_{\lambda_0}}\lm Q^{\f1d}\var_{\ti Q}f\\
&\leq\int_{\bigcup\Q_{\lambda_0}}f+\lm{\bigcup\Q_{\lambda_0}}^{\f1d}\var_{\ti{\bigcup\Q_{\lambda_0}}}f.
\end{align*}
This is finite because
\(\lm{\bigcup\Q_{\lambda_0}}<\infty\),
\(f\in L^1_\iloc(\Omega)\)
and \(\var_\Omega f<\infty\).
\end{proof}

\begin{proof}[Proof of \Cref{theo_goalint}]
By \Cref{pro_mfinl1loc} we have \(\Mi_\Omega f\in L^1_\iloc(\Omega)\).
Thus \Cref{lem_coareabv} allows us to invoke \Cref{pro_levelsetgoal} and we get
\[\var_\Omega\M_\Omega f=\int_\mathbb{R}\sm{\mb{\{x\in\Omega:\M_\Omega f>\lambda\}}}\lesssim\var_\Omega f.\]
\end{proof}

\section{Further approaches}

Now maybe the most obvious strategy to prove
\begin{equation}\label{eq_goal}
\var\M f\leq C_d\var f
\end{equation}
for the uncentered Hardy-Littlewood maximal operator is to transfer the arguments of this paper from dyadic cubes to balls, using ideas from \cite{weigt2020variation}.
So here is a potential alternative proof strategy of \cref{eq_goal}.
The idea is to conclude \cref{eq_goal} for general functions \(f\)
from \cref{eq_goal} for characteristic functions and subadditivity.

\begin{cla}\label{cla_charftogen}
Let \(\M\) be a maximal operator.
Assume that there is a functional \(V:\BV(\mathbb{R}^d)\rightarrow\mathbb{R}\) with the following properties.
For a function \(g\) with bounded variation and a characteristic function \(h\) supported on \(\{x:g(x)=\|g\|_\infty\}\)
\(V\) is subadditive,
\begin{equation}\label{eq_vsublinear}
V(g+h)\leq V(g)+V(h)
\end{equation}
and 
\begin{equation}\label{eq_vcharf}
V(h)\leq C_d\var h.
\end{equation}
Furthermore for all for all \(f\in\BV\) we have
\begin{equation}\label{eq_varmleqv}
\var\M f\leq V(f).
\end{equation}
Then we can conclude also for all \(f\in\BV\) that
\[\var\M f\leq C_d\var f.\]
\end{cla}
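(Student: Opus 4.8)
The plan is to reconstruct an arbitrary nonnegative $f$ from characteristic functions via its layer-cake decomposition, to apply the subadditivity \eqref{eq_vsublinear} once for each layer, and to estimate each layer by \eqref{eq_vcharf}. The observation that makes \eqref{eq_vsublinear} usable is that the layers must be added \emph{from the bottom up}: then at each stage the newly added characteristic function is supported precisely on the set where the function assembled so far attains its maximum, which is exactly the support condition required in \eqref{eq_vsublinear}.

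First I would reduce to a bounded nonnegative $f$ (the nonnegativity being discussed below). Fix $m\in\mathbb N$, put $f_m=m^{-1}\lfloor mf\rfloor$ so that $f_m\nearrow f$ and $mf_m$ is integer valued with values in $\{0,\dots,N\}$, and set $F_k=\{mf_m\ge k\}$. These sets are nested, $F_1\supseteq\dots\supseteq F_N$, and $mf_m=\sum_{k=1}^N\ind{F_k}$. With the partial sums $g_j=\sum_{k=1}^j\ind{F_k}=\min(mf_m,j)$ one has $g_j=g_{j-1}+\ind{F_j}$, and $\ind{F_j}$ is supported in $F_j\subseteq F_{j-1}=\{x:g_{j-1}(x)=\|g_{j-1}\|_\infty\}$. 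Hence \eqref{eq_vsublinear} applies at every step, and iterating it, starting from $V(0)\le0$ (which is \eqref{eq_vcharf} for the empty set), gives
\[
V(mf_m)\le\sum_{j=1}^N V(\ind{F_j})\le C_d\sum_{j=1}^N\var\ind{F_j}=C_d\var(mf_m),
\]
where the middle inequality is \eqref{eq_vcharf} and the last equality is the coarea formula \Cref{lem_coareabv}. Since the variation and a maximal operator are positively homogeneous, $\var(mg)=m\var g$ and $\M(mg)=m\M g$ for $m>0$, combining this with \eqref{eq_varmleqv} and dividing by $m$ yields
\[
\var\M f_m\le C_d\var f_m.
\]
Note that \eqref{eq_vcharf} and \eqref{eq_vsublinear} are used only for genuine characteristic functions, so no homogeneity of $V$ itself is needed.

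It remains to let $m\to\infty$. As $f_m\nearrow f$ and $\M$ is monotone, $\M f_m\nearrow\M f$, so $\M f_m\to\M f$ in $L^1_\loc$ by monotone convergence, and \Cref{lem_l1approx} gives $\var\M f\le\liminf_m\var\M f_m\le C_d\liminf_m\var f_m$. The main obstacle is this final limit: by \Cref{lem_coareabv} one has $\var f_m=m^{-1}\sum_k\sm{\mb{\{f\ge k/m\}}}$, which is only a Riemann sum of the coarea integrand $\lambda\mapsto\sm{\mb{\{f>\lambda\}}}$, and for a merely integrable integrand such sums need not converge to $\int\sm{\mb{\{f>\lambda\}}}\intd\lambda=\var f$. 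I would repair this by choosing the grid $\{k/m\}$ with a generic offset, for which the sums do converge to $\var f$ (and each $F_k$ has finite perimeter, so $mf_m\in\BV$) by a Fubini argument. Together with the initial reduction to nonnegative $f$, which cannot be done by a naive shift since that would leave $L^1$, these are the only genuinely delicate points, the layer-cake telescoping itself being immediate.
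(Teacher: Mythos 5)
Your proposal is correct and takes essentially the same route as the paper's own proof sketch: decompose into nested layers, add them from the bottom up so that each new characteristic function is supported where the partial sum attains its maximum, iterate \eqref{eq_vsublinear}, bound each layer by \eqref{eq_vcharf}, recombine via the coarea formula, and pass to general \(f\) by approximation and lower semicontinuity (\Cref{lem_l1approx}). In fact you are more careful than the paper, whose two-line sketch for nested characteristic functions \(h_1\geq\ldots\geq h_n\) ends with ``by approximation we can extend the estimate''; the genuine subtleties you identify there -- the scaling by \(m\), the Riemann-sum issue for the coarea integrand (fixed by a generic grid offset), and the reduction to nonnegative \(f\) -- are all glossed over in the paper's sketch as well.
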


\begin{proof}[Proof sketch]
Let \(h_1\geq\ldots\geq h_n\) be characteristic functions.
Then inductively we have
\begin{align*}
\var\M(h_1+\ldots+h_n)&\leq V(h_1+\ldots+h_n)\\
&\leq V(h_1+\ldots+h_{n-1})+V(h_n)\\
&\ldots\\
&\leq V(h_1)+\ldots+V(h_n)\\
&\leq C_d\var h_1+\ldots+\var h_n\\
&=C_d\var(h_1+\ldots+h_n),
\end{align*}
and by approximation we can extend the estimate from sums of characteristic functions to general \(f\in\BV\).
\end{proof}

Note that \(V=C_d\var\) satisfies \cref{eq_vsublinear,eq_vcharf},
but the whole statement becomes trivial because then \cref{eq_varmleqv} is already what we want to prove.
Still, this shows that the existence of a \(V\) as in \Cref{cla_charftogen} is actually equivalent to \cref{eq_goal}.
However even for the dyadic maximal operator the only \(V\) we have found is \(C_d\var\).

Another candidate for \(V\) is \(\var\M\),
because we already know \cref{eq_vcharf} from \cite{weigt2020variation}
and \cref{eq_varmleqv} is trivial.
In  \cite{weigt2020variation} we even prove \cref{eq_vcharf} for the uncentered Hardy-Littlewood maximal operator.
But \cref{eq_vsublinear} unfortunately fails for both the dyadic and the uncentered Hardy-Littlewood operator,
see \Cref{exa_nosublinearity}.

\begin{exa}\label{exa_nosublinearity}
Let \(d=1\) and \(g=\ind{[0,3)}+\ind{[5,8)}\) and \(h=\ind{[2,3)}+\ind{[5,6)}\).
Then for \(f=g,h,g+h\) we have
\[\var\M f=\M f(2.5)+[\M f(2.5)-\M f(4)]+[\M f(5.5)-\M f(4)]+\M f(5.5).\]
Thus
\begin{align*}
\var\M g&=1+(1-3/4)+(1-3/4)+1=2.5,\\
\var\M h&=1+(1-1/2)+(1-1/2)+1=3,\\
\var\M(g+h)&=2+(2-1)+(2-1)+2=6>3+2.5.
\end{align*}

The same counterexample works for the uncentered Hardy-Littlewood maximal function.
It also works for \(d>1\) by defining \(\tilde g:\mathbb{R}^d\rightarrow\mathbb{R}\) by \(\tilde g(x)=g(x_1)\ind{[-N,N]^d}(x)\)
with \(N\) large enough
and \(\tilde h:\mathbb{R}^d\rightarrow\mathbb{R}\) similarly.
\end{exa}

However since maximal operators are pointwise subadditive,
one might hope to find a modification of \(\var\M\) that is subadditive.
The most promising candidate is \(V(f)=\max\{\var\M f,C_d\var f\}.\)
By \cite{weigt2020variation} it satisfies \cref{eq_vcharf}
and it clearly also satisfies \cref{eq_varmleqv}.
In order to prove \cref{eq_vsublinear},
by the subadditivity of \(\var\) it suffices to prove
that there is a \(C_d\) such that
\begin{equation}\label{eq_subaddimproved}
\var\M(g+h)\leq\var\M g+C_d\var h
\end{equation}
for \(g,h\) as in \Cref{cla_charftogen}.
That means from \cref{eq_subaddimproved} one could conclude \(\var\M f\leq C_d\var f\).
But even for the dyadic maximal operator \cref{eq_subaddimproved} is still an open question.
Note that in \Cref{exa_nosublinearity} we have \(\var h=4\) so it is not a counterexample against \cref{eq_subaddimproved}.
Maybe \cref{eq_subaddimproved} is still close enough to \(\var\M h\leq C_d\var h\) for characteristic functions
to make use of \cite{weigt2020variation}.
One might also come up with more sophisticated functionals in between \(\var\M\) and \(\var\)
that satisfy the premise of \Cref{cla_charftogen}.

\bibliographystyle{plain}
\bibliography{bib}

\end{document}